\newtheorem{thm}{Theorem}[section]
\theoremstyle{definition}
\newtheorem{defn}[thm]{Definition}
\newtheorem{lem}[thm]{Lemma}
\newtheorem{cor}[thm]{Corollary}
\newtheorem{tm}[thm]{Theorem}
\newtheorem{rem}[thm]{Remark}
\theoremstyle{remark}
\newtheorem{claim}[thm]{{\bf Claim}}
\newtheorem*{notn}{Notation}
\renewcommand{\P}{\mathbb{P}}
\newcommand{\Bin}{\text{Bin}}
\def\t{\tau}
\newcommand{\brac}[1]{\left(#1\right)}
\newcommand{\bfrac}[2]{\brac{\frac{#1}{#2}}}
\newcommand{\set}[1]{\left\{#1\right\}}
\def\cP{{\cal P}}
\def\cH{{\cal H}}
\def\G{\Gamma}
\def\e{\epsilon}
\def\cL{{\cal L}}
\def\n{\nu}
\def\l{\lambda}
\def\th{\theta}
\def\s{\sigma}
\def\tS{{\bf T_S}}
\def\dS{{\bf D_S}}
\def\tO{\tilde{O}}
\newcommand{\beq}[2]{\begin{equation}\label{#1}#2\end{equation}}
\newcommand{\proofstart}{{\noindent \bf Proof\hspace{2em}}}
\newcommand{\proofend}{\hspace*{\fill}\mbox{$\Box$}\\ \medskip\\ \medskip}
\def\bU{{\bf U}}
\begin{document}
\date{}

\title{Packing Hamilton Cycles Online}

\author{Joseph Briggs\thanks{Department of Mathematical Sciences, Carnegie Mellon University, Pittsburgh PA. {\tt jbriggs@andrew.cmu.edu}.}, Alan Frieze\thanks{Department of Mathematical Sciences, Carnegie Mellon University, Pittsburgh PA. {\tt alan@random.math.cmu.edu}. Research supported in part by NSF Grants DMS1362785, CCF1522984 and a grant(333329) from the Simons Foundation.}, Michael Krivelevich\thanks{School of Mathematical Sciences, Raymond and Beverly Sackler Faculty of Exact Sciences, Tel Aviv University, Tel Aviv 6997801, Israel. {\tt krivelev@post.tau.ac.il}. Research supported in part by a USA-Israel
    BSF Grant and by a grant from Israel Science Foundation.
    },\\ Po-Shen Loh\thanks{Department of Mathematical Sciences, Carnegie Mellon University,
    Pittsburgh, PA 15213. {\tt ploh@cmu.edu}. Research supported in part by
    NSF grant DMS-1455125.}, Benny Sudakov\thanks{Department of Mathematics, ETH, 8092 Zurich, Switzerland. {\tt benjamin.sudakov@math.ethz.ch}. Research supported in part by SNSF grant 200021-149111.}}

\maketitle
\begin{abstract}
It is known that w.h.p. the hitting time $\t_{2\s}$ for the random graph process to have minimum degree $2\s$ coincides with the hitting time for $\s$ edge disjoint Hamilton cycles, \cite{BF}, \cite{KS}, \cite{KO}. In this paper we prove an online version of this property. We show that, for a fixed integer $\s\ge 2$, if random edges of $K_n$ are presented one by one then w.h.p. it is possible to color the edges online with $\s$ colors so that at time $\t_{2\s}$, each color class is Hamiltonian.
\end{abstract}
AMS Classification Codes: 05C80, 05C15.
\section{Introduction}
The celebrated \emph{random graph process}, introduced by Erd\H{o}s~and
R\'enyi~\cite{ErdRen} in the 1960's, begins with an empty graph on
$n$ vertices, and at every step $t=1,\ldots,\binom{n}{2}$ adds to the
current graph a single new
edge chosen uniformly at random out of all missing edges.
Taking a snapshot of the random graph process after $m$ steps produces the
distribution $G_{n,m}$.
An equivalent ``static" way of defining $G_{n,m}$ would be: choose
$m$ edges uniformly at random out of all $\binom{n}{2}$ possible
ones. One advantage in studying the random graph process, rather
than the static model, is that it allows for a higher resolution
analysis of the appearance of monotone graph properties (a graph
property is monotone if it is closed under edge addition).

A \emph{Hamilton cycle} of a graph is a simple cycle that passes through
every vertex of the graph, and a graph containing a Hamilton cycle
is called \emph{Hamiltonian}. Hamiltonicity is one of the most fundamental
notions in graph theory, and has been intensively studied in various
contexts, including random graphs. The earlier results on
Hamiltonicity of random graphs were obtained by P\'osa~\cite{Po},
and Korshunov \cite{Korshunov}. Improving on these results, Bollob\'as~\cite{Bollobas}, and
Koml\'os~and Szemer\'edi~\cite{KomSze} proved that if $m' =
\frac{1}{2}n \log n + \frac{1}{2} n\log \log n + \omega n$, then $G_{n,m'}$ is Hamiltonian w.h.p.
Here $\omega$ is any function of $n$ tending to infinity together with $n$. One obvious necessary condition for the graph to be Hamiltonian is
for the minimum degree to be at least 2, and the above result indicates
that the events of being Hamiltonian and of having all degrees at least two
are indeed bundled together closely. Bollob\'as~\cite{Bollobas}, and
independently, Ajtai, Koml\'os, and Szemer\'edi \cite{AKS}, further strengthened this by proving that w.h.p. the random graph process becomes Hamiltonian when the last vertex of degree one disappears.
A more general property $\cH_\s$ of having $\s$ edge disjoint Hamilton
cycles was studied by  Bollob\'as and Frieze \cite{BF}. They showed that if
$\s=O(1)$ then w.h.p. 
the random graph process satisfies $\cH_\s$ when the minimum degree becomes
$2\s$. It took quite a while, but this result was extended to the more
difficult case of growing $\s$ in the $G_{n,m}$ context by Knox, K\"uhn and Osthus \cite{KO} and Krivelevich and Samotij \cite{KS}.

Recently, quite a lot of attention and research effort has been
devoted to controlled random graph processes. In processes of this
type, an input graph or a graph process is usually generated fully
randomly, but then an algorithm has access to this random input
and can manipulate it in some well defined way (say, by dropping
some of the input edges, or by coloring them), aiming to achieve
some preset goal. There is usually the so-called {\em online}
version where the algorithm must decide on its course of action
based only on the history of the process so far and without assuming
any familiarity with future random edges. For example, in the so-called
\emph{Achlioptas process} the random edges arrive in batches of size $k$. 
An online algorithm chooses one of them and puts
it into the graph. By doing this one can attempt to accelerate or to delay the appearance 
of some property. Hamiltonicity in Achlioptas processes was studied in \cite{KLS}.
Another online result on Hamiltonicity was proved in \cite{LSV}. There, it
was shown that one can orient the edges of the
random graph process so that w.h.p. the resulting graph has a directed
Hamilton cycle exactly at the time when the underlying graph has minimum degree two.

Here we consider a Ramsey-type version of controlled random
processes. In this version, the incoming random edge, when it is exposed, is irrevocably colored by an
algorithm in one of $r$ colors, for a fixed $r\ge 2$.  The goal of
the algorithm is to achieve or to maintain a certain monotone graph property in
all of the colors. For example, in \cite{BFKLS} the authors considered the problem of creating a linear size (so-called {\em giant}) component in every color.

The above mentioned result of Bollob\'as and Frieze \cite{BF} gives rise to the following natural question. 
Can one typically construct $\s$ edge disjoint Hamilton cycles in an online fashion by the time the minimum degree becomes $2\s$? 
We answer this question affirmatively in the case $\s=O(1)$. 

\begin{tm}\label{th1}
For a fixed integer $\s\ge 2$, let $\t_{2\s}$ denote the hitting time for
the random graph process $G_i,i=1,2,\ldots$ to have minimum degree $2\s$.
Then w.h.p. we can color the edges of $G_i,i=1,2,..$ online with $\s$ colors so that $G_{\t_{2\s}}$ contains $\s$ Hamilton cycles $C_1,C_2,\ldots,C_\s$, where the edges of cycle $C_j$ all have color $j$.
\end{tm}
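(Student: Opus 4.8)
The algorithm must act online, but its correctness can be analyzed with the whole process $G_1\subseteq\dots\subseteq G_{\t_{2\s}}$ in hand. The plan is to fix a checkpoint $m_0:=\tfrac n2\log n-\omega n$ with $\omega=\omega(n)\to\infty$ slowly (so w.h.p.\ $m_0<\t_{2\s}$ and $\t_{2\s}-m_0$ is of order $n\log\log n$, in particular $\omega(n)$), to color ``for survival'' up to $m_0$ and ``for Hamiltonicity'' afterward, and to reduce Hamiltonicity of each color class to the booster / rotation--extension machinery. First I would record the standard facts that hold w.h.p.: $\t_{2\s}=\tfrac n2\brac{\log n+(2\s-1)\log\log n}+O(\omega n)$; a set $B$ of at most $n^{1-\d}$ vertices (for some $\d>\tfrac12$) has degree below $\tfrac{\log n}{10\s}$ at time $m_0$, while every vertex outside $B$ already has degree $\Theta(\log n)$ at $m_0$; and $B$ is scattered in $G_{\t_{2\s}}$, meaning it spans no edge, no two of its vertices share a neighbor, and every neighbor of a $B$-vertex has degree $\Theta(\log n)$. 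Write $U:=V\setminus B$.

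\emph{The coloring rule.} Call a color $c$ \emph{wanted} at a vertex $x$ if $x$ currently has fewer than two edges of color $c$. When an edge $uv$ arrives: if some color is wanted at both $u$ and $v$, use such a color; otherwise let $x\in\{u,v\}$ be the endpoint with fewer wanted colors (ties broken by lower current degree, then by a recorded ``debt''), color $uv$ with a color wanted at $x$, and record a debt at the other endpoint to be repaid on its next edge. Once a vertex has no wanted colors, its edges become \emph{free}: in Phase I (edges $1,\dots,m_0$) a free edge is colored so as to keep both color-degree vectors balanced, equivalently at random; in Phase II (edges $m_0{+}1,\dots,\t_{2\s}$) a free edge is used as a \emph{booster}, getting the least index $j$ such that it is a booster of the current color-$j$ graph on $U$ and that graph is not yet Hamiltonian (balanced/random if no such $j$).

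\emph{Why it works.} A P\'osa-type first-moment argument should give that w.h.p.\ each color-$j$ graph on $U$ emerges from Phase I as a connected expander, say $|N(S)\setminus S|\ge 2|S|$ whenever $|S|\le n/4$, hence with $\Omega(n^2)$ boosters; the inputs needed are that every vertex of $U$ leaves Phase I with $\Theta(\log n)$ edges of each color if it is dense, and at least a large constant number if it is one of the $\le n^{1-\d}$ vertices of $U\cap B$, together with the scatteredness of $B$. Next, all but $\mathrm{polylog}(n)$ of the order-$n\log\log n$ edges of Phase II are free, so, since each color-$j$ graph on $U$ stays a good expander, a uniformly random non-edge is a booster for the first unfinished color with probability at least a constant; a Chernoff / martingale estimate over Phase II then shows that w.h.p.\ every color absorbs the at most $n$ boosters it needs, with slack to spare. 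Finally, for each $j$ choose two color-$j$ edges $wa_w,wb_w$ at every $w\in B$; by scatteredness the pairs $\{a_w,b_w\}$ are vertex-disjoint, hence form a matching $M_j$ on $U$ of size $\le|B|=o(n/\log n)$, and the usual extension of rotation--extension to a prescribed linear forest yields a Hamilton cycle of the color-$j$ graph on $U$ through all of $M_j$; replacing each $a_wb_w$ by the path $a_w\,w\,b_w$ gives the Hamilton cycle $C_j$ through the vertices of $B$.

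\emph{The main obstacle.} Everything above is routine once one knows that the rule really does give every vertex at least two edges of every color, and this is exactly where the online constraint bites: at a vertex of final degree $2\s$ the coloring is forced to be two per color and hence rigid, so such a vertex must never be knocked off balance by a neighbor, yet the algorithm cannot tell in advance which low-degree vertices will stay small. One first shows w.h.p.\ that no edge of $G_{\t_{2\s}}$ has both endpoints of degree at most a suitable constant, so at most one endpoint of any edge is genuinely small; the residual threat is a genuinely small vertex meeting a neighbor that is only \emph{temporarily} of low degree. The heart of the proof is to bound, by $o(1/\log n)$, the probability that one of the $\Theta(\log n)$ vertices which reach final degree $2\s$ before $m_0$ (and so have no Phase II slack) ends up unbalanced. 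This should follow because such a vertex has very few wanted colors late in its life, so ``fewer wanted colors wins'' makes it win its conflicts and recover from a single bad assignment through the debt; it can only fail if a second bad assignment coincides, which costs another factor $1/\log n$, and here one also invokes the first-moment fact that all of its neighbors have degree $\Omega(\log n)$. Putting this together with the three strands above completes the proof.
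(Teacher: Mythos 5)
Your overall architecture is recognizably related to the paper's but departs from it in several respects: you use $\s$ colors with a ``two per color'' requirement directly, where the paper works with $q=2\s$ colors and a ``one per color'' requirement (merging pairs at the end); you set the phase boundary at $m_0=\tfrac n2\log n-\omega n$, where the paper sets it much earlier at $t_\e=\e n\log n$ (and uses the $Full$/$\lnot Full$ distinction to carve out two pools $E^*,E^+$ of genuinely randomly colored edges from nearly the whole process, not just its tail); and you set aside the low-degree set $B$ and re-attach it via a prescribed matching, where the paper carries all vertices through the rotation argument uniformly using the ``no small structures'' lemma. These are legitimate design choices and the booster-counting and scatteredness-of-$B$ ingredients are in the right spirit.

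The serious gap is exactly where you locate ``the heart of the proof,'' and your sketch does not close it. For a vertex $v$ with $d^{(\t_{2\s})}(v)=2\s$ there is \emph{zero} slack: every one of its $2\s$ edges must be colored with a color $v$ still wants, or $v$ never reaches two of every color. Your ``record a debt to be repaid on its next edge'' is only a tiebreak; it cannot conjure an extra edge, so a \emph{single} lost conflict is already fatal for such $v$, contrary to your claim that failure requires ``a second bad assignment'' to coincide. More fundamentally, replacing the paper's random choice among $C_u^{(t)}\cup C_v^{(t)}$ (Step 5 of COL) by a deterministic ``fewer wanted colors wins, with a debt tiebreak'' rule destroys the property the whole analysis rests on: in the paper, conditionally on $\tS,\bU,\dS$, each arriving conflict edge still gives $v$ a wanted color with probability at least $1/q$, \emph{independently} of everything else (Remark \ref{rem1}), which is what makes the $\P(\Bin(z_r,q^{-1})\le q-1)$ factors in the proof of Lemma \ref{prop1} legitimate. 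Under a history-dependent deterministic rule, the event ``$u_i$ is still needy at time $t_i$ \emph{and} it steals the edge from $v$'' is correlated across $i$ in ways the first-moment computation does not control, and nothing in your sketch replaces that computation. You would need a quantitative analogue of Lemma \ref{prop1} for your rule, and the rule as stated does not obviously admit one.

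A secondary, smaller issue: you assert that each color-$j$ graph on $U$ ``emerges from Phase I as a connected expander,'' but Phase I under your rule is not a uniformly random edge-coloring, since many early edges are forced by the wanting priority. The paper is careful here: expansion for sets that are not tiny (Cases 2 and 3 of Lemma \ref{th3}, and Corollary \ref{COR}) is proved using only the $E^*$ edges between $Full$ vertices, which by construction are colored uniformly at random, with the contribution of pre-$t_\e$ edges explicitly subtracted off. In your formulation that pool of certifiably random color-$j$ edges has to be identified and shown to be of size $\Theta(n\log n)$ per color before the union bound can be run; as written you are applying a ``random graph'' expansion estimate to a subgraph whose coloring the adversary partially controlled.
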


\section{Description of the coloring procedure}
We describe our coloring procedure in terms of $q=2\s$ colors we aim to color the edges so that each vertex has degree at least one in each color. Think of colors 1 and $1+\s$ being light red and dark red, say, and then that each vertex is incident with at least two red edges. This may appear cumbersome, but it does make some of the description of the analysis a little easier.

In the broadest terms, we construct two sets of edges $E^+$ and $E^*$. Let $\G_c^*$ be the subgraph of $G_{\t_{2\s}}$ induced by the edges of color $c$ in $E^*$. We ensure that w.h.p. this has minimum degree at least one for all $c$. We then show that w.h.p. after merging colors $c$ and $c+\s$ for $c\in[\s]$ the subgraph $\G_c^{**}=\G_c^*\cup \G_{c+\s}^*$ has sufficient expansion properties so that standard arguments using P\'osa rotations can be applied. For every color $c$, the edges of $E^*_c$ are used to help create a good expander, and produce a backbone for rotations. And the edges in $E^+_c$ are used to close cycles in this argument.
\begin{notn}
 ``At time $t$'' is taken to mean ``when $t$ edges have been revealed".
\end{notn}

\begin{notn}
Let $N^{(t)}(v)$ denote the set of neighbors of $v$ in $G_t$ and let $d^{(t)}_v=|N^{(t)}(v)|$.

For color $c \in [q]$, write $d_c=d_{c,t},N_c=N_{c,t}$ for the degrees and neighborhoods of vertices and sets in $\G_c$.
\end{notn}
\begin{defn}\label{Fudef}
Let $Full$ denote the set of vertices with degree at least $\frac{\epsilon \log n}{1000q}$ in every color at time 
$$
t_\epsilon := \epsilon n \log n\,,
$$
where $\e$ is some sufficiently small constant depending only on the constant $q$. The actual value of $\e$ needed will depend on certain estimates below being valid, in particular equation \eqref{qdef}.
A vertex is $Full$ if is lies in $Full$. Similarly, let $Full'\subseteq Full$ denote the set of vertices with degree at least $\frac{\epsilon \log n}{1000q}$ in every color at time $\frac12\epsilon n \log n$.
\end{defn}
 This definition only makes sense if $t_\e$ is an integer. Here and below we use the following convention. If we give an expression for an integer quantity that is not clearly an integer, then rounding the expression up or down will give a value that can be used to satisfy all requirements.
\subsection{Coloring Algorithm COL}
We now describe our algorithm for coloring edges as we see them. At any time $t$, vertex $v$ has a list $C_v^{(t)}:=\{c \in [q]:d_c^{(t)}(v)=0 \}$ of colors currently not present among edges incident to $v$; ``the colors that $v$ needs''. A vertex is {\em needy} at time $t$ if $C_v^{(t)}\neq\emptyset$. If the next edge to color contains a needy vertex then we try to reduce the need of this vertex. Otherwise, we make choices to guarantee expansion in $E^*$, needed to generate many endpoints in the rotation phase, and to provide edges for $E^+$, which are used to close cycles, if needed. 
\medskip

\parindent 0in
{\bf FOR} $t=1,2,\ldots,\t_{q}$ {\bf DO}\\
{\bf BEGIN}\\
\hspace{.25in}\parbox{5in}{
\begin{enumerate}[Step 1]
\item Let $e_t=uv$.
\item If $C_v^{(t)} \cup C_u^{(t)}= \emptyset$, $t > t_{\epsilon}$, and precisely one of $\{u,v\}$ (WLOG $u$) is $Full$, then give $uv$ the color $c$ that minimises $d_c(v)$ (breaking ties arbitrarily).  Add $uv$ to $E_c^*$.
\item If $C_v^{(t)} \cup C_u^{(t)}= \emptyset$, $t > t_{\epsilon}$ and \emph{both} $u,v \in Full$, give $uv$ a color $c$ uniformly at random from $[q]$. Then add this edge to $E^+_c$ or $E^*_c$, each with probability $1/2$.
\item If $C_v^{(t)} \cup C_u^{(t)}=\emptyset$ but $t \leq t_{\epsilon}$ or \emph{both} $u,v \notin Full $, then color $uv$ with color $c$ chosen uniformly at random from $[q]$. Add $uv$ to $E_c^*$.
\item Otherwise, color $uv$ with color $c$ chosen uniformly at random from $C_u^{(t)} \cup C_v^{(t)}$. Add $uv$ to $E_c^*$.
\end{enumerate}
}\\
{\bf END}

Let 
$$E^*=\bigcup_{c\in[q]}E_c^*\text{ and }E^+=\bigcup_{c\in[q]}E_c^+.$$
\section{Structural properties}
Let
$$p=\frac{\log n + (q-1) \log \log n - \omega}{n}\text{ and }m=\binom{n}{2}p$$
where 
$$\omega=\omega(n)\to\infty,\omega=o(\log\log n).$$

We will use the following well-known properties relating $G_{n,p}$ and $G_{n,m}$, see for example \cite{FK}, Chapter 1. Let $\cP$ be a graph property. It is monotone increasing if adding an edge preserves it, and is monotone decreasing if deleting an edge preserves it. We have:
\begin{align}
&\P(G_{n,m}\in\cP)\leq 10m^{1/2}\P(G_{n,p}\in\cP).\label{Prop1}\\
&\P(G_{n,m}\in\cP)\leq 3\P(G_{n,p}\in\cP), \text{ if $\cP$ is monotone}.\label{Prop2}
\end{align}

A vertex $v\in [n]$ is {\em small} if its degree $d(v)$ in $G_{n,m}$ satisfies $d(v)<\frac{\log n}{100q}$. It is {\em large} otherwise. The set of small vertices is denoted by $SMALL$ and the set of large vertices is denoted by $LARGE$.
\begin{defn}
A subgraph $H$ of $G_{n,m}$ with a subset $S(H) \subset V(H)$ is called a \emph{small structure} if
$$|E(H)|+|S(H)|-|V(H)| \geq 1.$$
\end{defn}
We say that $G_{n,m}$ {\em contains} $H$ if there is an injective homomorphism $\phi:H\hookrightarrow G_{n,m}$ such that $\phi(S(H))\subseteq SMALL$.
The important examples of $H$ include:
\begin{itemize}
\item A single edge between 2 \emph{small} vertices.
\item A path of length at most five between two \emph{small} vertices.
\item A copy of $C_3$ or $C_4$ with at least one \emph{small} vertex.
\item Two distinct triangles sharing at least one vertex.
\end{itemize}
\begin{lem}\label{lem1}
For any fixed small structure $H$ of constant size,
$$\P(G_{n,m}\text{ contains }H)=o(n^{-1/5}).$$
\end{lem}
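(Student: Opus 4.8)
The plan is to bound the expected number of injective homomorphisms from each fixed small structure $H$ into $G_{n,m}$ with the prescribed vertices landing in $SMALL$, and then invoke Markov's inequality. By the standard transfer estimate \eqref{Prop1}, it suffices to work in $G_{n,p}$ (paying only the harmless $10m^{1/2}=n^{O(1)}$ factor, which is why the target is $o(n^{-1/5})$ rather than $o(1)$). So first I would fix $H$ with vertex set of size $v=|V(H)|$, edge set of size $e=|E(H)|$, and marked set $S=S(H)$ of size $s$, satisfying $e+s-v\ge 1$.

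Next, I would set up the first-moment computation in $G_{n,p}$. The number of ways to choose an injective image of $V(H)$ is at most $n^v$; each of the $e$ edges of $H$ is present with probability $p=\Theta\!\bfrac{\log n}{n}$; and each of the $s$ marked vertices must be small, i.e. have degree less than $\frac{\log n}{100q}$ in $G_{n,p}$. The key point is that for a \emph{fixed} vertex $w$, $\P(d(w)<\frac{\log n}{100q})$ is very small: $d(w)\sim \Bin(n-1,p)$ has mean $\sim\log n$, and a Chernoff bound gives $\P\brac{\Bin(n-1,p)<\tfrac{\log n}{100q}} \le n^{-1+o(1)}$ — in fact the exponent is $-(1-H)+o(1)$ where $H=H(1/(100q))$ is bounded away from $1$, so this probability is $n^{-c}$ for some constant $c=c(q)\in(0,1)$ that can be taken arbitrarily close to $1$ by noting $100q\ge 200$. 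One has to be mildly careful that these $s$ smallness events, together with the $e$ edge-presence events, are not all independent: the edges of $H$ incident to a marked vertex $w$ condition its degree. The clean way around this is to reveal the $e$ edges of $H$ first, then bound each marked vertex's probability of being small using only the remaining $\ge n-1-v$ potential neighbors, which changes the Chernoff exponent by a negligible $o(1)$. This yields, for a single isomorphism type,
\[
\E[\#\text{copies}] \;\le\; n^{v}\cdot p^{e}\cdot \brac{n^{-c+o(1)}}^{s} \;=\; n^{\,v - e + o(e) - (c-o(1))s}\cdot(\log n)^{e}.
\]

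Then I would extract the exponent. Writing $c=1-\eta$ for small $\eta>0$, the exponent of $n$ is $v-e-s + \eta s + o(1) \le -(e+s-v) + \eta s + o(1) \le -1 + \eta s + o(1)$, using the defining inequality $e+s-v\ge 1$. Since $H$ has constant size, $s$ is a constant, so choosing $q$-dependent $\eta$ small enough (equivalently, using that $\frac{1}{100q}$ is small, so $H(1/(100q))$ is small, so $c$ is close to $1$) makes $\eta s<\frac{4}{5}$, giving an exponent below $-1/5$, and the $(\log n)^e$ factor is absorbed. Markov's inequality then gives $\P(G_{n,m}\text{ contains }H)=o(n^{-1/5})$. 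Summing over the finitely many isomorphism types of small structures of bounded size (in particular the four listed families) preserves the bound.

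The main obstacle is handling the dependence between the edge-presence events and the smallness events at marked vertices, and making sure the Chernoff exponent $c$ genuinely stays bounded away from $1$ uniformly — this is where the specific threshold $\frac{\log n}{100q}$ in the definition of $SMALL$, together with $q\ge 4$, is doing real work. A secondary but routine point is verifying the four example structures all satisfy $e+s-v\ge1$: a single small--small edge has $(e,s,v)=(1,2,2)$; a path of length $\ell\le 5$ between two small vertices has $(\ell,2,\ell+1)$ so $e+s-v=1$; a $C_3$ or $C_4$ with a small vertex has $e=v$ and $s\ge1$; two triangles sharing a vertex have $v\le 5$, $e=6$, $s=0$, so $e-v\ge1$. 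I would present the general lemma and then remark that each listed example is a special case.
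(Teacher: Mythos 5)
Your overall approach --- first-moment count in $G_{n,p}$ using a Chernoff-type tail bound for the smallness of each marked vertex, revealing the edges of $H$ first to sever the dependence, then transferring to $G_{n,m}$ via \eqref{Prop1} --- is the same as the paper's. However, there is an arithmetic error in where you set the bar, and as written the proof does not close. You aim for an exponent ``below $-1/5$'' in $G_{n,p}$, but \eqref{Prop1} costs a factor of $10m^{1/2} = n^{1/2+o(1)}$ in passing to $G_{n,m}$, so a $G_{n,p}$-bound of $n^{-1/5-\delta}$ only gives $n^{3/10-\delta+o(1)}$ in $G_{n,m}$, which does not even tend to $0$ unless $\delta > 3/10$. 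The sentence ``paying only the harmless $10m^{1/2}=n^{O(1)}$ factor, which is why the target is $o(n^{-1/5})$ rather than $o(1)$'' has the direction of the loss inverted: the $m^{1/2}$ factor is incurred \emph{after} the $G_{n,p}$ estimate, so you need the $G_{n,p}$ bound to be strictly \emph{stronger} than the claimed $G_{n,m}$ bound, specifically $o(n^{-7/10})$. The paper correctly aims for (and achieves) $o(n^{-3/4})$ in $G_{n,p}$, which then transfers to $o(n^{-1/4+o(1)}) = o(n^{-1/5})$.

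The good news is that your computation actually has the strength to repair this: you correctly identify $\eta = 1-c \approx \frac{1+\log(100q)}{100q}$, which for $q\ge 4$ is about $0.02$, and for the listed example structures $s\le 2$, so $\eta s + o(1)$ is well below $1/4$ and the exponent of $n$ in $G_{n,p}$ is at most $-1+\eta s+o(1) \le -3/4$, matching the paper. You just need to state the correct target $-7/10$ (or $-3/4$) rather than $-1/5$, and observe that $\eta s < 1/4$ holds for the structures of interest. One more caveat worth flagging: you phrase the conclusion as ``choosing $q$-dependent $\eta$ small enough'' as though $\eta$ were a free parameter, but $\eta$ is pinned down by $q$ (which is fixed) and the $\frac{\log n}{100q}$ threshold in the definition of \textsc{Small}; what actually ensures $\eta s$ is small is the explicit smallness of $1/(100q)$ together with the bounded size of $s$ for the specific small structures used.
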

\begin{proof}
We will prove that
\beq{q1}{
\P(G_{n,p}\text{ contains }H)=o(n^{-3/4}).
}
This along with \eqref{Prop1} implies the lemma.

Let $h=|V(H)|,f=|E(H)|,s=|S(H)|$ so that $f+s\geq h+1$. Then:
\begin{align*}
& \P\left(G_{n,p}\text{ contains }H\right) \leq \binom{n}{h}h! p^f \left( \sum_{i=0}^{\frac{\log n}{100q}}
\binom{n-h}{i}
 p^i(1-p)^{n-h-i}\right)^s\\
& \lesssim n^h
\left( \frac{\log n}{n} \right)^f\left( \sum_{i=0}^{\frac{\log n}{100q}} \left(
 \frac{(e+o(1)) \log n}{i} \right)^i
 e^{-\log n -(q-1)\log \log n +\omega +o(1)}
 \right)^s \\
& \leq n^h\left(\frac{\log n}{n}\right)^f\left(\frac{(300q)^{\frac{\log n}{100q}}}  {n (\log n)^{q-1-o(1)} } \right)^s\\
&=o(n^{h-f-s+1/4})=o(n^{-3/4}).
\end{align*}
(We used the notation $A\lesssim B$ in place of $A\leq (1+o(1))B$.) In the calculation above, in the first line we placed the vertices of $H$ and decided about the identity of $s$ vertices falling into $SMALL$, then required that all $f$ edges of $H$ are present in $G_{n,p}$, and finally required that for each of the $s$ vertices in $SMALL$, their degree outside the copy of $H$ is at most $\frac{\log n}{100q}$.

\end{proof}

\begin{lem}\label{lem4}
W.h.p., for every $k \in \left[q-1,\frac{\log n}{100q}\right]$, there are less than $\n_k=\frac{e^{2\omega}(\log n)^{k-q+1}}{(k-1)!}$ vertices of degree $k$ in $G_{n,m}$.
\end{lem}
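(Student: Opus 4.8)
The plan is to transfer the statement to $G_{n,p}$ and then, for each admissible $k$, control the number $Y_k$ of vertices of degree \emph{at most} $k$ by a first-moment estimate together with a Chebyshev bound; the subtlety is that two different kinds of slack are in play and neither on its own suffices. Concretely, since the number of vertices of degree exactly $k$ never exceeds $Y_k$, it is enough to show that w.h.p. $Y_k<\nu_k$ for every $k\in[q-1,\frac{\log n}{100q}]$. For each fixed $k$ the event $\{Y_k\ge\nu_k\}$ is monotone decreasing, since adding an edge cannot decrease any degree; hence the union of these events over $k$ is monotone decreasing, and by \eqref{Prop2} it suffices to bound the probability of that union in $G_{n,p}$, losing only a factor $3$. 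Working with the monotone quantity $Y_k$ is essential: the $G_{n,p}$ failure probability will only be of order $e^{-c\omega}$, which is nowhere near $o(m^{-1/2})$, so \eqref{Prop1} is of no use here.

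For the first moment, put $d=np=\log n+(q-1)\log\log n-\omega$. A routine computation gives $(1-p)^{n-1-k}=(1+o(1))e^{-d}=(1+o(1))\frac{e^\omega}{n(\log n)^{q-1}}$ uniformly for $0\le k\le\frac{\log n}{100q}$, and since $k\ll d$ the ratio of successive terms in the lower tail of $\Bin(n-1,p)$ at $k$ is at most $\tfrac1{20}$, so that tail is within a factor $2$ of its top term; hence $\E Y_k\le(1+o(1))\frac{2d^k e^\omega}{k!(\log n)^{q-1}}$. Comparing with $\nu_k=\frac{e^{2\omega}(\log n)^{k-q+1}}{(k-1)!}$ gives $\E Y_k/\nu_k\le(1+o(1))\frac{2}{ke^\omega}(d/\log n)^k$. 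The key observation is that $k\mapsto(d/\log n)^k/k$ is convex, so on $[q-1,\frac{\log n}{100q}]$ it is maximised at an endpoint: at $k=q-1$ it tends to $\frac1{q-1}$ (because $d/\log n\to1$), and at $k=\frac{\log n}{100q}$ it is at most $\frac{100q}{\log n}(\log n)^{1/100}\to 0$ (using $(d/\log n)^{\log n/(100q)}\le(\log n)^{(q-1)/(100q)}\le(\log n)^{1/100}$). Therefore $\E Y_k=o(\nu_k)$ uniformly in $k$, and in particular $\E Y_k\le\nu_k/2$ for $n$ large.

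For the second moment, the indicators $\mathbbm{1}[\deg v\le k]$, $v\in[n]$, interact only through single potential edges: conditioning on whether $uv\in E(G_{n,p})$ yields, for $u\ne v$,
\[
\operatorname{Cov}\big(\mathbbm{1}[\deg u\le k],\,\mathbbm{1}[\deg v\le k]\big)=p(1-p)\,\P(\Bin(n-2,p)=k)^2\le p(1+o(1))\big(\E Y_k/n\big)^2,
\]
so $\operatorname{Var}(Y_k)\le\E Y_k+2p(\E Y_k)^2\le\frac{\nu_k}{2}+\frac{\log n}{n}\nu_k^2$. By Chebyshev, using $\nu_k-\E Y_k\ge\nu_k/2$, we get $\P(Y_k\ge\nu_k)\le\frac{2}{\nu_k}+\frac{4\log n}{n}$. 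Now sum over $k\in[q-1,\frac{\log n}{100q}]$: since $\nu_{k+1}/\nu_k=(\log n)/k\ge 100q$ throughout the range, the numbers $1/\nu_k$ decay geometrically, so $\sum_k 1/\nu_k\le 2/\nu_{q-1}=2(q-2)!/e^{2\omega}\to 0$, while the remaining term contributes at most $\frac{\log n}{100q}\cdot\frac{4\log n}{n}\to 0$. Hence the probability in $G_{n,p}$ is $o(1)$, and \eqref{Prop2} then delivers the lemma.

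The step I expect to be the real obstacle is making the union bound over all $\asymp\log n$ values of $k$ go through for \emph{every} admissible $\omega$. A plain Markov bound gives only $\P(Y_k\ge\nu_k)\le\E Y_k/\nu_k$, which is of order $e^{-\omega}(d/\log n)^k$, and summing that over $k$ would need $\omega$ to be of order $\log\log n$ — which is not assumed. The extra decay must therefore come from the variance, and it is crucial that $\sum_k 1/\nu_k$ is essentially its single largest term $1/\nu_{q-1}=\Theta(e^{-2\omega})$; together with the monotone reformulation (so that the cheap transfer \eqref{Prop2} applies in place of \eqref{Prop1}), this is what lets the proof close for all $\omega\to\infty$ with $\omega=o(\log\log n)$.
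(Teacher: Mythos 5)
Your proof is correct, but it takes a genuinely different route from the paper's. Both reduce to $G_{n,p}$ via \eqref{Prop2} after observing monotonicity of the relevant event, and both control the number of vertices of degree \emph{at most} $k$. From there the paper does not use a second moment at all: it bounds $\P(Y_k\ge\nu_k)$ by $\binom{n}{\nu_k}\bigl(\sum_{\ell\le k}\binom{n-\nu_k}{\ell}p^\ell(1-p)^{n-\nu_k-\ell}\bigr)^{\nu_k}$, exploiting that the degree-to-complement events for $\nu_k$ chosen vertices are independent, and then shows the base of the $\nu_k$-th power is of the form $e^{-\omega+O(1)}f(k)$ with $f$ log-convex and maximised at an endpoint where $f=e^{O(1)}$. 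This yields $\P(Y_k\ge\nu_k)\le e^{-\omega\nu_k/2}$ — exponentially small in $\nu_k$ — so the union over $k$ is trivial. You instead run the classical first-plus-second-moment method: Markov alone gives $\E Y_k/\nu_k=O(e^{-\omega})$, which cannot absorb a union over $\Theta(\log n)$ values of $k$; you correctly diagnose this and supply the extra decay through Chebyshev, with the clean covariance identity $\operatorname{Cov}=p(1-p)\P(\Bin(n-2,p)=k)^2$, landing on $\P(Y_k\ge\nu_k)\le 2/\nu_k+O(p)$. The union then closes because $\nu_{k+1}/\nu_k=(\log n)/k\ge 100q$ makes $\sum_k 1/\nu_k=\Theta(e^{-2\omega})$, while $\sum_k O(p)=O((\log n)^2/n)$. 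Both arguments use an endpoint-extremality observation for a convex function of $k$ — yours for $(d/\log n)^k/k$ to control $\E Y_k/\nu_k$, the paper's for $f(k)=(\log n)^{kq/\log n}/k$ inside the base of the power — so the roles are parallel but deployed at different points. Your route is more elementary and standard; the paper's gives a much stronger (super-polynomially small in $\nu_k$) tail bound, which the lemma does not actually require. Either proof is valid.
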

\begin{rem}
$\n_k$ is increasing in $k$ for this range, and for the largest $k=\frac{\log n}{100q}$ we have
$\n_k \lesssim 
n^{\frac{\log (100eq)}{100q}}$.
\end{rem}
\begin{proof}
Fix $k$ and then we have
\begin{align*}
&\P(G_{n,p}\text{ has at least $\n_k$ vertices of degree at most }k)\\ 
&\leq \binom{n}{\n_k}\left(\sum_{\ell=0}^k\binom{n-\n_k}{\ell}p^\ell(1-p)^{n-\n_k-\ell}\right)^{\n_k}\\
&=\binom{n}{\n_k}\left((1+o(1))\binom{n-\n_k}{k}p^k(1-p)^{n-\n_k-k}\right)^{\n_k}\\
& \leq \left(\frac{ne}{\n_k} \times \frac{n^k}{k!} \left(\frac{\log n+(q-1)\log \log n -\omega}{n}\right)^k  e^{-\log n-(q-1)\log \log n  +\omega+o(1)} \right)^{\n_k}\\
&\leq\brac{\frac{e^{\omega +O(1)}}{(\log n)^{q-1}}\frac{(\log n+q\log\log n)^k}{k!\n_k}}^{\n_k}\\
&=\left(\frac{e^{-\omega +O(1)}}{k}\left(1+\frac{q\log \log n}{\log n}\right)^k \right)^{\n_k}\\
&\leq \left(e^{-\omega+O(1)} \frac{(\log n)^{kq/\log n}}{k}\right)^{\n_k}.
\end{align*}
The function $f(k)=\frac{(\log n)^{kq/\log n}}{k}$ is log-convex, and so $f$ is maximised at the extreme values of $k$ (specifically
$f(q-1)=e^{O(1)}>f\left(\frac{\log n}{100q}\right)=o(1)$). Hence,
$$\P(\exists k:G_{n,p}\text{ has at least $\n_k$ vertices of degree }k)\leq \sum_{k=q-1}^{\frac{\log n}{100q}}e^{-\omega\n_k/2}=o(1).$$
Applying \eqref{Prop2} we see that
$$\P(\exists k:G_{n,m}\text{ has at least $\n_k$ vertices of degree }k)=o(1),$$
which is stronger than required.
\end{proof}
\begin{lem}\label{lem3}
With probability $1-o(n^{-10})$,  $G_{n,m}$ has no vertices of degree $\geq 20\log n$.
\end{lem}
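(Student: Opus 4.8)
The plan is to prove the statement in the $G_{n,p}$ model and transfer to $G_{n,m}$ via \eqref{Prop2}, since the event ``there is a vertex of degree at least $20\log n$'' is monotone increasing. In $G_{n,p}$ the degree of a fixed vertex $v$ is distributed as $\Bin(n-1,p)$, with mean $(n-1)p < \log n + (q-1)\log\log n$. As $q$ is a fixed constant and $\omega = o(\log\log n)$, this mean is $(1+o(1))\log n$, and in particular at most $2\log n$ once $n$ is large.

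Next I would apply a Chernoff upper-tail bound: for $X \sim \Bin(N,p)$ with $\mu = Np$ and any real $a>1$,
\[
\P(X \ge a\mu) \le \exp\!\big(-\mu(a\ln a - a + 1)\big).
\]
Writing $\mu \le 2\log n$ and choosing $a$ so that $a\mu = 20\log n$ (hence $a \ge 10$), the exponent equals $20\log n\,(\ln a - 1) + \mu \ge 20\log n\,(\ln 10 - 1) > 26\log n$. Therefore $\P\big(d_{G_{n,p}}(v) \ge 20\log n\big) \le n^{-26}$ for every vertex $v$.

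A union bound over the $n$ vertices then gives $\P\big(G_{n,p}\text{ has a vertex of degree} \ge 20\log n\big) \le n^{-25} = o(n^{-11})$, and applying \eqref{Prop2} multiplies this by at most $3$, yielding the claimed bound $o(n^{-10})$ for $G_{n,m}$. There is no genuine obstacle here; the only thing to monitor is that the Chernoff exponent comfortably exceeds $11$, so that the bound survives both the union bound and the constant factor in the $G_{n,p}$–$G_{n,m}$ comparison. Using the cruder estimate \eqref{Prop1} in place of \eqref{Prop2} would also suffice, at the cost of an extra $O(n^{1/2}\log^{1/2} n)$ factor that is still easily absorbed.
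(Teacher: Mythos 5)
Your proof is correct and takes essentially the same approach as the paper: pass to $G_{n,p}$ via monotonicity and \eqref{Prop2}, bound the upper tail of a single vertex's degree, and union bound over the $n$ vertices. The paper uses the elementary estimate $n\binom{n-1}{20\log n}p^{20\log n}\le n\left(e/10\right)^{20\log n}$ where you invoke a Chernoff bound, but the two are essentially the same tail estimate and yield the same $n^{-25}$-type bound, comfortably inside $o(n^{-10})$.
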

\begin{proof}
We will prove that w.h.p. $G_{n,p}$ has the stated property. We can then obtain the lemma by applying \eqref{Prop2}.
\begin{align*}
\P(\exists v:d(v) \geq 20 \log n)
& \leq n \binom{n-1}{20 \log n}p^{20 \log n}\\
& \leq n \left( \frac{en}{20 \log n} \frac{2 \log n}{n} \right)^{20 \log n}\\
& \leq n \left( \frac{e}{10} \right)^{20 \log n}\\
&= o(n^{-10}).
\end{align*}
\end{proof}
\section{Analysis of COL}
Let $\G=G_{m}$ and let $d(v)$ denote the degree of $v\in[n]$ in $\G$. Let
$$\th_v=\begin{cases}0&d(v)\geq q.\\1&d(v)=q-1.\end{cases}$$
\begin{lem}\label{prop1}
Suppose we run COL as described above.
Then w.h.p. $|C_v^{(m)}|=\th_v$ for all $v\in [n]$.
\end{lem}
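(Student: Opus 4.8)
The plan rests on one feature of \textsf{COL}: if a freshly revealed edge $uv$ has exactly one needy endpoint, say $v$, then Step~5 is \emph{forced} to give it a colour $v$ still needs, since $C_u^{(t)}\cup C_v^{(t)}=C_v^{(t)}$. Thus a needy $v$ can receive a colour it already has only through an edge $uv$ in which $u$ is needy too, and only if the colour drawn uniformly from $C_u^{(t)}\cup C_v^{(t)}$ lands in $C_u^{(t)}\setminus C_v^{(t)}$; call such an edge \emph{$v$-wasteful}, and note that this in particular requires $u$ to be needy when the edge appears. Now $|C_v^{(t)}|$ is non-increasing, and a first-moment computation with $p=\tfrac{\log n+(q-1)\log\log n-\omega}{n}$ shows $G_{n,m}$ has minimum degree at least $q-1$ w.h.p., so $|C_v^{(m)}|\ge\theta_v$ always and it is enough to prove $|C_v^{(m)}|\le\theta_v$ for every $v$. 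If this fails at $v$ then $v$ is needy at time $m$, hence needy throughout its entire edge-history, so every edge at $v$ is coloured in Step~5 and is either new for $v$ or $v$-wasteful; counting, the number of $v$-wasteful edges is exactly $d(v)-(q-|C_v^{(m)}|)$, which is $\ge 1$ when $d(v)=q-1$ and $\ge d(v)-q+1$ when $d(v)\ge q$. So it suffices to show w.h.p.\ that no low-degree $v$ has a $v$-wasteful edge and no vertex has $\ge d(v)-q+1$ of them. I condition throughout on the w.h.p.\ events of Lemmas~\ref{lem1}, \ref{lem4} and \ref{lem3}.

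To handle neediness I would attach an independent uniform roll $\rho_e\in[0,1]$ to each edge and realise the Step~5 choice by listing the colours of $C_v^{(t)}$ first (when $v$ is a needy endpoint), so that $\rho_e\le 1/q$ forces the chosen colour into $C_v^{(t)}$, using $|C_v^{(t)}|/|C_u^{(t)}\cup C_v^{(t)}|\ge 1/q$. Then $v$ becomes non-needy as soon as $q$ of the edges arriving at $v$ while $v$ is needy have roll $\le 1/q$, so a Chernoff bound for $\Bin(j,1/q)$ and a union bound give that w.h.p.\ every vertex is non-needy once it has $q^2\log n$ edges; conditionally on the graph process, the probability that a fixed vertex is needy when it has $j$ edges is at most $e\,j^{q-1}e^{-j/q}$.

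The main point is then that for every vertex $v$ the expected number of $v$-wasteful edges is $O_q(1)$. Indeed, writing $\sigma_i$ for the time of $v$'s $i$-th edge $vu_i$ and using the estimate above, $\P(u_i\text{ needy at }\sigma_i\mid\text{graph})\le e\,(d_{u_i}^{(\sigma_i)})^{q-1}e^{-d_{u_i}^{(\sigma_i)}/q}$, and in $G_{n,m}$ the $i$-th edge at $v$ typically arrives near step $\tfrac{i}{d(v)}\cdot\tfrac{n\log n}{2}$, where a uniformly chosen other endpoint already has about $\tfrac{i\log n}{d(v)}$ edges; summing the resulting geometric-type series over $i$ gives $\sum_i\P(u_i\text{ needy at }\sigma_i)=O_q(d(v)/\log n)=O_q(1)$ by Lemma~\ref{lem3}. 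For a \emph{large} $v$ ($d(v)\ge\tfrac{\log n}{100q}$) failure needs $\ge d(v)-q+1\ge\tfrac{\log n}{200q}$ of these wasteful edges; the events $\{u_i\text{ needy at }\sigma_i\}$ are almost independent (two of them share a roll only if $u_iu_j$ is an edge, i.e.\ $vu_iu_j$ a triangle, and the forbidden small structure ``two triangles through a vertex'' allows at most one such pair at $v$), so a Chernoff-type bound for a sum of essentially independent indicators with total mean $O_q(1)$ pins $\P(\text{at least }\tfrac{\log n}{200q}\text{ of them hold})$ at $o(n^{-2})$, and a union bound over $v$ finishes the large case. For $v$ with $d(v)=q-1$ or $v$ \emph{small} ($d(v)<\tfrac{\log n}{100q}$) I would instead bound $\P(v\text{ has a }v\text{-wasteful edge})\le\sum_i\P(u_i\text{ needy at }\sigma_i)$ and note that, because $v$ has few edges, its $i$-th edge-time has density only $O_{d(v)}(x^{i-1}/m)$ near step $xm$, which (together with the fact that the first edge of $v$ cannot be wasteful) makes this sum $\le n^{-1+o(1)}$; a union bound over all $v$ then disposes of this case.

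Combining the three cases gives $|C_v^{(m)}|=\theta_v$ for all $v$ w.h.p. The step I expect to be the main obstacle is the probabilistic bookkeeping underpinning the third paragraph: decoupling the independent rolls from the random graph, justifying uniformly (over all $v$ and all $i$, w.h.p.) that the $i$-th edge at $v$ lands near step $\tfrac{i}{d(v)}\cdot\tfrac{n\log n}{2}$ and that a random endpoint there has degree close to $\mathrm{Poisson}$ with mean $\tfrac{i\log n}{d(v)}$, and turning ``almost independent with small mean'' into a genuine large-deviation bound in the large-vertex case --- it is exactly here that the sparsity inputs of Lemma~\ref{lem1} (bounded co-degree at a vertex, few low-degree vertices) have to be fed in carefully.
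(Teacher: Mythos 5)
Your opening observations are correct and your reduction is essentially the same as the paper's: Step~5 gives a needy endpoint a colour it needs with probability at least $1/q$, so failure at $v$ forces many neighbours $u_i$ of $v$ to be needy at the time $t(vu_i)$ arrives; the paper states this as Remark~\ref{rem1} and measures it through the same binomial tail $\P(\Bin(z,q^{-1})\le q-1)$. Where you diverge is in how you turn this into a union bound: you split by $d(v)$ and aim to bound, for small $v$, the probability of \emph{a single} ``wasteful'' edge, and for large $v$, a Chernoff-type bound on an ``essentially independent'' sum. The paper instead conditions explicitly on $(\bU,\tS,\dS,\cL)$, proves the hypergeometric estimate in Claim~\ref{cl1} for the joint law of the $Z_r$'s, and derives the sharp per-vertex bound $p_v\le C_8^k/(\log n)^{k-q+2+\th_v}$, which it then weighs against the vertex counts $\n_k$ from Lemma~\ref{lem4}.

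The decisive gap is in your small-vertex branch. First, for a vertex of degree $q-1$ whose neighbours are all large, the probability of a wasteful edge is \emph{not} $n^{-1+o(1)}$: the second edge at $v$ lands at a time $\sigma_2$ that is $\Theta(m/\log n)$ with probability $\Theta(1/\log n)$, at which point the other endpoint has $O(1)$ edges and is needy with probability $\Theta(1)$. So $\P(\exists\text{ wasteful edge at }v)=\Theta\big((\log n)^{-O(1)}\big)$, consistent with the paper's bound $C_8^{q-1}/(\log n)^2$, and a union bound over all $n$ vertices is hopeless; you must instead use Lemma~\ref{lem4}'s count $\n_{q-1}\approx e^{2\omega}$ of degree-$(q-1)$ vertices. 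Second, and worse, for a small $v$ with $q\ll d(v)<\frac{\log n}{100q}$ you only need ``at least one wasteful edge'', but failure actually requires $d(v)-q+1\gg 1$ of them. Since such a $v$ can have many neighbours of degree just above the LARGE threshold, its expected number of wasteful edges is $\Theta_q(1)$, so $\P(\exists\text{ a wasteful edge})$ can be $\Theta(1)$, while $\n_{d(v)}$ can be as large as $n^{\Theta(1/q)}$ — your bound cannot possibly beat the vertex count in this regime. You must, as the paper does, exploit that \emph{many} neighbours must be needy simultaneously, which is precisely what drives the factor $(\log n)^{-(k-q+2+\th_v)}$ in $p_v$. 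Finally, in the large-vertex branch the assertion that the events $\{u_i\text{ needy at }\sigma_i\}$ are ``essentially independent'' (so that a concentration inequality applies to a sum with mean $O_q(1)$) is not justified: beyond shared colour rolls, these events are coupled through the joint distribution of the arrival times and of each $u_i$'s degree at its own arrival time, and this is exactly what the inductive conditioning of \eqref{f1} and the hypergeometric estimate \eqref{more}--\eqref{another} are designed to disentangle. As it stands, the argument would need to be rebuilt around the paper's conditional structure to close.
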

In words, Lemma \ref{prop1} guarantees that the algorithm COL typically performs so that at time $m$, each vertex of degree at least $q$ has all colors present at its incident edges, while each vertex of degree $q-1$ has exactly one color missing. (It is well known that w.h.p. $\delta(G_m)=q-1$, see for example \cite{FK}, Section {\bf 4.2}.)

\begin{proof}
Fix $v$ and suppose $v$ has $k$ neighbours in LARGE, via edges $\{f_i = vu_i\}_{i=1}^k$. Then in general $d(v)-1\leq k \leq d(v)$ as small vertices do not share a path of length two. Also, when $v$ is small, $k=d(v)$.  Write $t(e)$ for the time $t \in \left[1,m\right]$ at which an edge $e$ appears in the random graph process, i.e. $t(e_i)=i$. Let $t_i=t(f_i)$ and assume that $t_i<t_{i+1}$ for $i>0$. We omit $i=1$ in the next consideration since $v$ will always get a color it needs by time $t_1$. (It may get a color before $t_1$ through an edge $vw$ where $w$ is not in LARGE.) Every time an $f_i,i\geq 2$, appears while $u_i$ needs no additional colors, $v$ gets a color it needs. So for $v$ to have $|C_v^{(m)}|>\th_v$ at the end of the process, this must happen at most $q-2-\th_v$ times, so there is certainly some set 
$$S=\set{i_1<i_2<\cdots<i_s}\subseteq [2,k]\text{ of }s=k-q+1+\th_v\text{ indices,}$$ 
whose corresponding edges $\set{f_i,i\in S}$ incident with $v$ satisfy $C_{u_i}^{(t_i)}\neq \emptyset$.  Let $\tS$ denote $\{t_i:i \in S\}$ and $\bU$ denote the sequence $u_1,u_2,\ldots,u_k$. In the following we will sum over $S$ and condition on the choices for $\tS$ and then estimate the probability that $C_{u_i}^{(t_i)} \neq \emptyset$ for $i\in S$. For a fixed $S$ there will be at least $\binom{m-k}{|S|+1}$ equally likely choices for the set $\set{t_i,i\in \set{1}\cup S}$. (We do not condition on $t_1$. The factor $t_{i_1}-1$ in \eqref{BB0} below will allow for the number of choices for $t_1$.) Let $\cL$ denote the occurrence of the bound of $20\log n$ on the degree of $v$ and its neighbors (see Lemma \ref{lem3}), and note that $\P(\cL)=1-o(n^{-10})$.

Taking a union bound over $S$, and letting
\begin{equation*}
A_i:=\bigg\{ C_{u_i}^{(t_i)} \neq \emptyset  \bigg\},
\end{equation*}
we have
\begin{align}
\P(|C_v^{(m)}|>\th_v\mid\cL,\bU)
 &\leq
  \sum_{\substack{S \subset [2,k]\\ |S| = s} } \sum_{t_i:i \in \set{1}\cup S}
   \frac{1}{\binom{m-k}{k-q+2+\th_v}}\P\bigg(\bigwedge_{i \in  S}
    A_i \bigg\vert \tS,\bU,\cL\bigg)\\
 &\approx \sum_{\substack{S\subset [2,k] \\ |S| = s} } \sum_{t_i:i \in S}
  \frac{t_{i_1}-1}{\binom{m}{k-q+2+\th_v}}\P\bigg(\bigwedge_{i \in S}
   A_i \bigg\vert \tS,\bU,\cL \bigg),\label{BB0}
\end{align}
since there are $t_{i_1}-1$ choices for $t_1$ and $k^2=o(m)$, implying $\binom{m-k}{k-q+2+\th_v}\approx \binom{m}{k-q+2+\th_v}$, given $\cL$.
Next let
\begin{align*}
&Y_i=\{\text{edges of }u_i\text{ that appeared before }t_i\text{ excluding edges contained in $N^{(m)}(v)$}\},\\
&d_r=d(u_r)\text{ and }Z_{r}:=|Y_r|\text{ for }r=1,2,\ldots,s,\\
&\dS=\set{d_i:i\in S}.
\end{align*}
Now fix $\bU$ and $S$  and $\tS$ and $\dS$. 
\begin{rem}\label{rem1}
Going back to Algorithm COL, we observe that Step 5 implies that if $C_v^{(t)}\neq \emptyset$ then $uv$ is colored with a color in $C_v^{(t)}$ with probability at least $\frac{1}{q}$. This holds regardless of the previous history of the algorithm and also holds conditional on $\tS,\bU,\dS$. Indeed, the random bits used in Step 5 are independent of the history and are distinct from those used to generate the random graphs. The latter explains why we can condition on the future by fixing $\tS,\bU,\dS$. We condition on $\cL$ in order to control $s$ as $O(\log n)$.
\end{rem}
Then,
\begin{align}
&\P\left( A_{i_1} \wedge \dots \wedge A_{i_s}\mid\tS,\bU,\dS,\cL\right)\\
&=\sum_{z_s} \underbrace{\P(A_{i_s}\mid A_{i_1} , \dots, A_{i_{s-1}}, Z_s=z_s,\tS,\bU, \dS,\cL)}_{\leq\P(\Bin(z_s,q^{-1})\leq q-1) \text{ by Remark \ref{rem1}}}\P(A_{i_1},\dots, A_{i_{s-1}}, Z_s=z_s\mid\tS,\bU,\dS,\cL)\\
& \leq \sum_{z_s} g(z_s)\sum_{z_{s-1}}\P(A_{i_{s-1}}\mid A_{i_1},\dots,A_{i_{s-2}}, Z_{s-1}=z_{s-1},Z_s=z_s,\tS,\bU,\dS,\cL) \\
&\hspace{2in}\times\P(A_{i_1}, \dots, A_{i_{s-2}},Z_{s-1}=z_{s-1},Z_s=z_s\mid\tS,\bU,\dS,\cL) \\
&\leq \sum_{z_s,z_{s-1}}g(z_s)g(z_{s-1}) \P(A_{i_1}, \dots, A_{i_{s-2}},Z_{s-1}=z_{s-1},Z_s=z_s\mid\tS,\bU,\dS,\cL)\\
& \leq \sum_{z_s, \dots, z_1} g(z_s)\cdots g(z_2)\P(Z_r=z_r,r=1, \dots,s\mid\tS,\bU,\dS,\cL) \text{ (by induction)}\label{f1}
\end{align}
Here $g(z):=\P(\Bin(z,q^{-1})\leq q-1)$ for any $z\geq 0$.
\begin{claim}\label{cl1}
$$\P(Z_r=z_r, r=1,2, \dots, s\mid\tS,\bU,\dS,\cL)\leq \left(1+\tO(n^{-1})\right)\prod_{r=1}^s \frac{\binom{t_r}{z_r}\binom{m-t_r}{d_r-z_r}}{\binom{m}{d_r}},$$
where $\tO$ hides polylog factors.
\end{claim}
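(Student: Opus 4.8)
The plan is to work in the static permutation model: realise the process as a uniformly random bijection $\pi\colon E(K_n)\to[\binom n2]$, with $G_t$ the $t$ edges of smallest $\pi$-value. It is enough to prove the bound after additionally conditioning on the $\pi$-positions of all $n-1$ edges incident to $v$, since averaging over the outcomes of that refinement recovers the claim. After this refinement $N^{(m)}(v)$ is a known set $W$ with $d_v:=|W|$, the number of positions $\le m$ used by $v$-edges is $d_v$, and the number used before $t_r$ is a known $a_r$; by $\cL$ (and Lemma~\ref{lem4}) all of $d_v,a_r,s,d_r,z_r$ are $O(\log n)$.

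The structural point, which is precisely why $Y_r$ discards $N^{(m)}(v)$-internal edges, is a disjointness property: for $r\ne r'$ the edge families $F_r:=\{u_{i_r}x:\ x\notin W\cup\{v,u_{i_r}\}\}$ are pairwise disjoint (a common edge would be $u_{i_r}u_{i_{r'}}$ with both endpoints in $W$), and each $F_r$ is disjoint from the $v$-edges. I would then expose $F_1,F_2,\dots,F_s$ one block at a time and write
\[
\P(Z_1=z_1,\dots,Z_s=z_s\mid\cdots)=\prod_{r=1}^{s}\P\bigl(Z_r=z_r\mid Z_1=z_1,\dots,Z_{r-1}=z_{r-1},\ \cdots\bigr),
\]
the role of disjointness being that conditioning on the positions of $F_1,\dots,F_{r-1}$ removes only $O(\log n)$ edges and $O(\log n)$ positions from the ``urn'' available to $F_r$, hence changes its conditional law by a factor $1+\tO(n^{-1})$.

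It remains to bound each conditional factor by $(1+\tO(n^{-1}))\binom{t_r}{z_r}\binom{m-t_r}{d_r-z_r}\big/\binom m{d_r}$. Given the conditioning, $d(u_{i_r})=d_r$ forces the $d_r-1$ edges of $u_{i_r}$ in $G_m$ besides $f_{i_r}$ to occupy a uniformly random subset of the $m-O(\log n)$ still-free positions $\le m$; of these, $Y_r$ keeps only the ones lying in $F_r$ (not $W$-internal) and before $t_r$, of which there are $t_r-O(\log n)$ free ones. The number of $W$-internal $G_m$-edges at $u_{i_r}$ is the codegree of $u_{i_r}$ and $v$, which is zero with probability $1-\tO(n^{-1})$ by a first-moment estimate; conditioning on this, $Z_r$ is, up to additive error $\tO(n^{-1})$, hypergeometric with parameters $\bigl(m-O(\log n),\,t_r-O(\log n),\,d_r-O(\log n)\bigr)$. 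A routine comparison of binomial coefficients — using $m=\Theta(n\log n)$ against the $O(\log n)$ shifts, and carried out in the direction giving an upper bound — identifies this pmf with $\binom{t_r}{z_r}\binom{m-t_r}{d_r-z_r}\big/\binom m{d_r}$ up to a factor $1+\tO(n^{-1})$. Multiplying the $s=O(\log n)$ factors keeps the total error at $1+\tO(n^{-1})$.

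I expect the main obstacle to be the bookkeeping in this last paragraph: being precise about exactly which data each conditioning ($\bU$ together with the $\mathrm{LARGE}/\mathrm{SMALL}$ information it carries, $\dS$, $\cL$, and the already-exposed earlier blocks $F_{r'}$) actually pins down, and verifying that none of these perturbs the conditional law of $Z_r$ by more than a $1+\tO(n^{-1})$ factor because at most $O(\log n)$ edges and positions are affected; together with getting the hypergeometric comparison to run in the direction that yields an upper bound while correctly absorbing the small additive slack coming from the codegree events. Everything else — the reduction to the permutation model, the disjointness, and the per-block conditioning — is conceptually straightforward.
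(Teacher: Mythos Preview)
Your approach is essentially the paper's: decompose via the chain rule and bound each conditional factor using that the prior conditioning (on $v$-edges and on earlier blocks $F_{r'}$) removes only $O(\log^2 n)$ positions from the urn. The paper writes the $r$-th factor directly as an overcounted numerator over an undercounted denominator,
\[
\frac{\binom{t_r}{z_r}\binom{m-t_r}{d_r-z_r}}{\binom{m-d_2-\cdots-d_{r-1}-s}{d_r}},
\]
rather than going through your codegree-then-hypergeometric reduction; your explicit use of the permutation model and the disjointness of the $F_r$ is a slightly cleaner packaging of the same idea.

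One caution on the codegree detour: splitting off $\{\text{codeg}\neq 0\}$ yields an \emph{additive} $\tO(n^{-1})$ slack, which does not by itself give the multiplicative $(1+\tO(n^{-1}))$ bound at values of $z_r$ for which the target weight $\binom{t_r}{z_r}\binom{m-t_r}{d_r-z_r}/\binom{m}{d_r}$ is itself $o(n^{-1})$. The paper's direct overcounting --- which never conditions on the codegree, but simply upper-bounds the number of position configurations consistent with $Z_r=z_r$ regardless of where any $W$-internal edges of $u_{i_r}$ sit --- sidesteps this, and is the cleanest route through the obstacle you correctly flag in your last paragraph.
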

\proofstart
Fix $\frac{\log n}{100q} \leq d_1,d_2,\ldots,d_s=O(\log n)$ and $t_1,t_2,\ldots,t_s$. Then, for every $1\leq r\leq s$,
\begin{align}
\P(Z_r=z_r\mid Z_{r-1}=z_{r-1},\ldots, Z_1=z_1,\tS,\bU,\dS,\cL)
&\leq (1+o(n^{-10}))\frac{\binom{t_r}{z_r} \binom{m-t_r}{d_r-z_r}}{\binom{m-d_2-\cdots-d_{r-1}-s}{d_r}}\label{more}\\
&\leq \brac{1+\tO(n^{-1})}\frac{\binom{t_r}{z_r}\binom{m-t_r}{d_r-z_r}}{\binom{m}{d_r}}.\label{another}
\end{align}

{\bf Explanation for \eqref{more}:} The the first binomial coefficient in the numerator in \eqref{more} bounds the number of choices for the $z_r$ positions in the sequence where an edge contributing $Y_r$ occurs. This holds regardless of $z_1,z_2,\ldots,z_{r-1}$. The second binomial coefficient bounds the number of choices for the $d_r-z_r$ positions in the sequence where we choose an edge incident with $u_r$ after time $t_r$. Conversely, the denominator in \eqref{more} is a lower bound on the number of choices for the $d_r$ positions  where we choose an edge incident with $u_r$, given $d_1,d_2,\ldots,d_{r-1}$. We subtract the extra $s$ to (over)count for edges from $v$ to $u_{r+1},\ldots,u_s$. The factor $(1+o(n^{-10}))$ accounts for the conditioning on $\cL$.

Expanding $\P(Z_r=z_r,r=1, \dots, s\mid \tS,\bU,\dS,\cL)$ as a product of $s=O(\log n)$ of these terms completes the proof of Claim \ref{cl1}.
\proofend

Going back to \eqref{f1} we see that given $d_1,d_2,\ldots,d_s$,
\begin{align}
&\P\left( A_{i_1} \wedge \dots \wedge A_{i_s}\mid\tS,\bU,\dS,\cL\right)\\
&\lesssim \prod_{r=1}^s\sum_{z_r=0}^{d_r}\brac{\P(Bin(z_r,q^{-1})\leq q-1) \times\frac{\binom{t_r}{z_r}\binom{m-t_r}{d_r-z_r}}{\binom{m}{d_r}}}\\
&\leq\prod_{r=1}^s\sum_{z_r=0}^{d_r}\brac{C_1\binom{z_r}{\min\set{z_r,q-1}} \frac{1}{q^{q-1}}\brac{1-\frac{1}{q}}^{z_r}\times\frac{\binom{t_r}{z_r} \binom{m-t_r}{d_r-z_r}}{\binom{m}{d_r}}}\label{C1}\\
&\leq \prod_{r=1}^s\sum_{z_r=0}^{d_r}\brac{C_1\max\set{1,z_r^{q-1}}e^{-z_r/q} \times\frac{\binom{t_r}{z_r}\binom{m-t_r}{d_r-z_r}}{\binom{m}{d_r}}}.\label{AA1}
\end{align}

Here, $C_1=C_1(q)$ depends only on $q$. We will use constants $C_2, C_3, \dots$ in a similar fashion without further comment.

{\bf Justification for \eqref{C1}:} 
If $z_r\leq q-1$ then $\P(Bin(z_r,q^{-1})\leq q-1)=1$ and $C_1=eq^q$ will suffice.

If $q\leq z_r\leq 10q$ we use
$$\P(Bin(z_r,q^{-1})\leq q-1)\leq 1\text{ and }\binom{z_r}{q-1} \frac{1}{q^{q-1}} \brac{1-\frac{1}{q}}^{z_r}\geq \frac{1}{q^{q-1}} \brac{1-\frac{1}{q}}^{10q}$$

and $C_1=e^{20}q^q$ will suffice in this case.

If $z_r>10q$ then putting $a_i:=\P(Bin(z_r,q^{-1})=i)=\binom{z_r}{i}\frac1{q^i}\brac{1-\frac1q}^{z_r-i}$ for $i\leq q-1$ we see that
$$\frac{a_i}{a_{i-1}}=\frac{z_r-i+1}{i}\cdot\frac{1}{q-1}\geq \frac{z_r-q}{q^2}>\frac{z_r}{2q^2}\geq\frac{5}{q}.$$
So here
\begin{multline}
\P(\Bin(z_r,q^{-1})\leq q-1)=\sum_{i=0}^{q-1}a_i \leq a_{q-1} \left(1+\frac{2q^2}{z_r}+\dots+\left(\frac{2q^2}{z_r}\right)^{q-2}\right)\leq\\ \brac{1-\frac1q}^{1-q}\brac{\binom{z_r}{q-1} \frac{1}{q^{q-1}}\brac{1-\frac{1}{q}}^{z_r}} \frac{\bfrac{q}{5}^{q-1}-1}{\frac{q}{5}-1},
\end{multline}
and thus $C_1=(5q)^q$ suffices.

This completes the verification of  \eqref{C1}.

Now, writing $(t)_z$ for the \emph{falling factorial} $t!/(t-z)!=t(t-1)(t-2)\dots(t-z+1)$,
\begin{align}
\frac{\binom{t_r}{z_r}\binom{m-t_r}{d_r-z_r}}{\binom{m}{d_r}}
&= \binom{d_r}{z_r}\frac{(t_r)_{z_r}
(m-t_r)_{d_r-z_r}}{(m)_{d_r}}\\
&=\binom{d_r}{z_r}\prod_{i=0}^{z_r-1}\frac{t_r-i}{m-(d_r-z_r)-i}\cdot \prod_{i=0}^{d_r-z_r-1}\frac{m-t_r-i}{m-i}\\
&\leq \brac{1+O\bfrac{d_r^2}{m}}\binom{d_r}{z_r}\bfrac{t_r}{m}^{z_r}\brac{1-\frac{t_r}{m}}^{d_r-z_r}. \label{HH}
\end{align}
Observe next that if $z_r\geq q^2$ then
\beq{HHH}{
(z_r)_{q-1}=z_r^{q-1}\prod_{i=0}^{q-1}\brac{1-\frac{i}{z_r}}\geq z_r^{q-1}\brac{1-\frac{q^2}{2z_r}}\geq \frac{z_r^{q-1}}{2}.
}
It follows from \eqref{HH} and \eqref{HHH} that
\begin{align}
&\sum_{z_r=q^2}^{d_r}C_1z_r^{q-1}e^{-z_r/q}\times \frac{\binom{t_r}{z_r}\binom{m-t_r}{d_r-z_r}}{\binom{m}{d_r}}\\
&\leq 2C_1\sum_{z_r=q-1}^{d_r}(z_r)_{q-1}\binom{d_r}{z_r} \bfrac{t_re^{-1/q}}{m}^{z_r}\brac{1-\frac{t_r}{m}}^{d_r-z_r}\\
&\leq 2C_1(d_r)_{q-1}\bfrac{t_r}{m}^{q-1}\sum_{z_r=q-1}^{d_r} \binom{d_r-q+1}{z_r-q+1}\bfrac{t_re^{-1/q}}{m}^{z_r-q+1}\brac{1-\frac{t_r}{m}}^{d_r-z_r}\\
&\leq2C_1\bfrac{d_rt_r}{m}^{q-1}\brac{1-\frac{t_r}{m}\brac{1-e^{-1/q}}}^{d_r-q+1}\\
&\leq 2C_1\bfrac{d_rt_r}{m}^{q-1}\exp\set{-\frac{(d_r-q+1)t_r}{m}\brac{1-e^{-1/q}}}.
\end{align}
Furthermore, not forgetting
\begin{align}
\sum_{z_r=0}^{q^2-1}C_1\max\set{1,z_r^{q-1}}e^{-z_r/q}\times \frac{\binom{t_r}{z_r}\binom{m-t_r}{d_r-z_r}}{\binom{m}{d_r}} &\leq C_2\sum_{z_r=0}^{q^2-1} \frac{\binom{t_r}{z_r}\binom{m-t_r}{d_r-z_r}}{\binom{m}{d_r}} \\
&\leq C_3\sum_{z_r=0}^{q^2-1}t_r^{z_r}\cdot\frac{(m-t_r)^{d_r-z_r}}{(d_r-z_r)!}\cdot \frac{d_r!}{m^{d_r}}\\
&\leq C_3\sum_{z_r=0}^{q^2-1}\bfrac{d_rt_r}{m}^{z_r}e^{-(d_r-z_r)t_r/m}\\
&\leq C_4\psi\bfrac{d_rt_r}{m},\label{why}
\end{align}
where $\psi(x)=e^{-x}\sum_{z=0}^{q^2-1}x^z$. (Now $z_r\leq q^2$ and so the factor $e^{z_rt_r/m}\leq e^{q^2}$ can be absorbed into $C_4$.) Going back to \eqref{AA1} we have
\begin{multline}
\P\left( A_{i_1} \wedge \dots \wedge A_{i_s}\mid\tS,\bU,\dS,\cL\right)\leq\\ C_5^s\prod_{r=1}^s\brac{\bfrac{d_rt_r}{m}^{q-1} \exp\set{-\frac{d_rt_r}{m}\brac{1-e^{-1/q}}}+ \psi\bfrac{d_rt_r}{m}}.\label{BB1}
\end{multline}
It follows from \eqref{BB0} and \eqref{BB1} that, 
\begin{multline*}
p_{v}:=\P(|C_v^{(m)}|>\th_v\mid \tS,\bU,\dS,\cL)\\
\leq \sum_{\substack{S \subset [2,k] \\ |S|=s} } \sum_{t_i:i \in S} \frac{t_{i_1}C_5 ^s}{\binom{m}{s+1}} \prod_{r=1}^s\brac{\bfrac{d_rt_r}{m}^{q-1} \exp\set{-\frac{d_rt_r}{m}(1-e^{-1/q})}+\psi\bfrac{d_rt_r}{m}}.
\end{multline*}
Replacing a sum of products by a product of sums and dividing by $s!$ to account for repetitions, we get
\begin{align}
p_{v}&\leq \sum_{\substack{S \subset [2,k] \\ |S|=s} } \frac{C_5^s}{\binom{m}{s+1}s!} \prod_{r=2}^s\brac{\sum_{t=1}^m \brac{\bfrac{d_rt}{m}^{q-1}\exp\set{-\frac{d_rt}{m}(1-e^{-1/q})}+  \psi\bfrac{d_rt}{m}}}\\
& \hspace{2in}\times \brac{ \sum_{t=1}^m \brac{t\bfrac{d_1t}{m}^{q-1} \exp\set{-\frac{d_1t}{m}(1-e^{-1/q})}+t  \psi\bfrac{d_1t}{m}}}.
\end{align}
We now replace the sums by integrals. This is valid seeing as the summands have a bounded number of extrema, and we replace $C_5$ by $C_6$ to absorb any small error factors.
\begin{align}
p_{v}&\leq \sum_{\substack{S \subset [2,k] \\ |S|=s} } \frac{C_6^s}{\binom{m}{s+1}s!} \prod_{r=2}^s\brac{\int_{t=0}^\infty\left[ \bfrac{d_rt}{m}^{q-1}\exp\set{-\frac{d_rt}{m}(1-e^{-1/q})} +\psi\bfrac{d_rt}{m}\right]dt}\\
  & \hspace{2in} \times \brac{ \int_{t=0}^{\infty} \brac{t \bfrac{d_1t}{m}^{q-1}\exp\set{-\frac{d_1t}{m}(1-e^{-1/q})}+\psi\bfrac{d_1t}{m}}dt }\\
&=\sum_{\substack{S \subset [2,k] \\ |S|=s} } \frac{C_6^s}{\binom{m}{s+1}s!} \prod_{r=2}^s\brac{\frac{m}{d_r}\int_{x=0}^\infty (x^{q-1}e^{-(1-e^{-1/q})x}+\psi(x))dx}\\
  & \hspace{3in} \times \frac{m^2}{d_1^2}\brac{ \int_{x=0}^{\infty} \brac{x^q\exp\set{-(1-e^{-1/q})x}+ x\psi(x)}dx } \\
&\leq\sum_{\substack{S \subset [2,k] \\ |S|=s} } \frac{C_6^s}{\binom{m}{s+1}s!}\cdot\bfrac{ C_7m}{\min_r\{d_r\}}^{s+1} \\
&\leq \frac{C_8^k}{(\log n)^{k-q+2+\th_v}}.
\end{align}
Applying Lemmas \ref{lem4} and \ref{lem3} and removing the conditioning on $\tS,\bU,\dS,\cL$ we see that with $k_0=\frac{\log n}{100q}$,
\begin{align*}
&\P(\exists v:|C_v^{(m)}|> \th_v)\\
&\leq \P(\neg\cL)+\sum_{k=q-1}^{k_0}\frac{e^{2\omega}(\log n)^{k-q+1}}{(k-1)!}\times \frac{ C_8^k}{(\log n)^{k-q+2+\th_v}}+n\sum_{k=k_0}^{20\log n} \frac{C_8^k}{(\log n)^{k-q+2}}\\
&\leq o(1)+\frac{e^{2\omega}}{\log n}\sum_{k\geq q-1} \frac{C_8^k}{(k-1)!}+n\sum_{k=k_0}^{20\log n}\frac{C_8^k}{(\log n)^{k/2}}\\
&\leq o(1)+\frac{C_{9}e^{2\omega+C_9}}{\log n}\\
&=o(1).
\end{align*}
\end{proof}
We show next that at time $m$, w.h.p. sets of size up to $\Omega(n)$ have large neighbourhoods in every color.\\
We first prove that typically ``large-degree vertices have large degree in every color'': let $d^*_c(v)$ denote the number of edges incident with $v$ that COL colors $c$, except for those edges that are colored in Step 3.
\begin{thm}\label{th2}
There exists $\epsilon=\epsilon(q) >0$ such that w.h.p. on completion of COL every $v \in LARGE$
has $d_c^*(v) \geq \frac{\epsilon \log n}{1000q}$ for all $c \in [q]$.
\end{thm}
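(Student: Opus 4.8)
The plan is to handle a vertex $v\in LARGE$ according to whether or not $v\in Full$. If $v\in Full$ there is nothing probabilistic to do: by Definition \ref{Fudef} we have $d_c^{(t_\epsilon)}(v)\ge\frac{\epsilon\log n}{1000q}$ for every colour $c$, Step 3 is never invoked before time $t_\epsilon$, so every one of these colour-$c$ edges at $v$ is counted by $d_c^*$; since $d_c^*(v)$ can only increase as the process continues, $d_c^*(v)\ge\frac{\epsilon\log n}{1000q}$ on completion. So from now on fix $v\in LARGE\setminus Full$.

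Two elementary observations. Since $v\notin Full$, no edge at $v$ is ever coloured in Step 3 (which requires \emph{both} endpoints to be $Full$), hence $d_c^*(v)=d_c(v)$ at all times. And whenever $u\in Full$, $v\notin Full$, an edge $uv$ coloured in Step 2 is given a colour of \emph{currently minimum} degree at $v$; since $v\notin Full$, \emph{every} Step-2 edge at $v$ is of this kind. I will use the deterministic balancing fact: if among the edges at $v$ at least $B$ of them are, at the moment they are coloured, assigned a colour of minimum current degree at $v$, then at the end $\min_c d_c(v)\ge B/q-1$ (writing $D=\min_cd_c(v)$ for the final value, each colour $c'$ admits at most $D+1$ such ``min-degree assignments'', since only its first $D+1$ edges can arrive while $d_{c'}(v)\le D$, so $B\le q(D+1)$). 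Thus it suffices to show that w.h.p. every $v\in LARGE\setminus Full$ receives at least $\frac{\epsilon\log n}{500}$ Step-2 edges, for then $\min_c d_c^*(v)=\min_c d_c(v)\ge\frac{\epsilon\log n}{1000q}$ provided $\epsilon$ is small enough in terms of $q$.

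An edge $uv$ fails to be a Step-2 edge only if (i) it arrives at time $\le t_\epsilon$, or (ii) $v$ is needy when it arrives, or (iii) $u$ is needy when it arrives, or (iv) $u\notin Full$. Writing $E_\epsilon(v),R(v),U(v),W(v)$ for the numbers of edges at $v$ of types (i)--(iv), the number of Step-2 edges at $v$ is at least $d(v)-E_\epsilon(v)-R(v)-U(v)-W(v)$, and since $d(v)\ge\frac{\log n}{100q}$ it is enough that w.h.p., for \emph{every} $v\in LARGE$, each of these four quantities is at most $\tfrac1{10}d(v)$. For $E_\epsilon(v)$: given $d(v)$, the number of $v$'s edges arriving by time $t_\epsilon$ is hypergeometric with mean $\approx2\epsilon d(v)$, so for $\epsilon$ small a Chernoff bound controls the deviation to $\tfrac1{10}d(v)$. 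For $R(v)$: $v$ acquires all $q$ colours after exactly $q$ ``useful'' needy edges, while any \emph{wasted} needy edge $uv$ forces $u$ to be needy too at that time (otherwise $C_u=\emptyset$ and Step 5 colours $uv$ with the one colour $v$ still needs), so $R(v)\le q+U(v)$. For $U(v)$: an edge $uv$ with $u$ needy lies among $u$'s first $\rho_u$ edges, where $\rho_u$ is $u$'s coupon-collector time, which has constant expectation since $q=O(1)$; using Lemma \ref{lem1} (no two $SMALL$ vertices are joined by a path of length two, so $v$ has at most one $SMALL$ neighbour, treated separately) one gets $\E[U(v)]=O(1)$, far below $\tfrac1{10}d(v)$. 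For $W(v)$: one first checks that $Full$ omits only $n^{1-\Omega(1)}$ vertices w.h.p. — a vertex with at least $\frac{\epsilon\log n}{50}$ edges by time $t_\epsilon$, most of them coloured uniformly in Step 4, lies in $Full$ w.h.p., and only $n^{1-\Omega(1)}$ vertices have fewer edges than that by time $t_\epsilon$ — and then a first-moment argument shows no $LARGE$ vertex has more than $\tfrac1{10}d(v)$ non-$Full$ neighbours.

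The main obstacle is uniformity over the low-degree end of $LARGE$: there can be as many as $n^{\Theta(1/q)}$ vertices of degree near $\frac{\log n}{100q}$ (this is exactly what Lemma \ref{lem4} quantifies), and for such a vertex the ``constant-factor'' concentration estimates above only have failure probability about $n^{-\Theta(\epsilon/q)}$ (or $n^{-\Theta(1/\mathrm{poly}(q))}$), which does not survive a naive union bound over all vertices. This is handled exactly as in the proof of Lemma \ref{prop1}: one sums the per-vertex failure probability against the degree-class sizes $\nu_k$ from Lemma \ref{lem4} rather than against $n$, and for the hypergeometric bound on $E_\epsilon(v)$ one takes $\epsilon=\epsilon(q)$ small enough that $\log(1/\epsilon)$ dominates the exponent $\frac{\log(100eq)}{100q}$ governing $\nu_{\log n/(100q)}$. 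A secondary point, again as in Lemma \ref{prop1}, is that in bounding $U(v)$ and $R(v)$ one must decouple the random edge order from the coins used in Steps 2, 4, 5; one does this by conditioning on the degrees and arrival times of the relevant neighbours (the role of $\tS,\bU,\dS$ there), which keeps those coins independent of everything conditioned upon.
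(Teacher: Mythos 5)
Your proposal does establish the statement by a genuinely different route, and in broad strokes it is plausible, but it is doing strictly harder work than the paper and its hardest step is only sketched. The paper's proof avoids ever bounding your quantities $R(v)$ and $U(v)$. Its key observation (stated just before Lemma~\ref{lemy}) is that if $u\in Full$ is the non-needy endpoint of an edge $uv$ arriving after $t_\epsilon$, then \emph{regardless of whether $v$ is needy}, the colour given to $uv$ minimises $d_c(v)$ and the edge lands in $E^*$: Step~2 does this explicitly, while Step~5 with $C_u=\emptyset$ and $C_v\neq\emptyset$ chooses a colour $c\in C_v$, i.e.\ one with $d_c(v)=0$, which is automatically a minimiser. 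Thus the paper only has to show that every $v\in LARGE\setminus Full$ has many post-$t_\epsilon$ edges to $Full$ (Lemmas~\ref{lemy},~\ref{lemz}), and then a single deterministic balancing argument finishes. In contrast you throw away all edges arriving while $v$ is needy (type (ii)) and all edges to a needy $u$ (type (iii)), and are therefore forced to prove $U(v)\le\frac{1}{10}d(v)$ uniformly over the low-degree end of $LARGE$.

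That $U(v)$ bound is the genuine soft spot of your sketch. The line ``$\mathbb{E}[U(v)]=O(1)$, far below $\frac1{10}d(v)$'' plus a pointer to ``handled exactly as in Lemma~\ref{prop1}'' is not a proof: you need a tail bound of the form $\P\bigl(U(v)>\frac1{10}d(v)\bigr)=o(\nu_{d(v)}^{-1})$, and obtaining it requires the full conditioning/chaining apparatus of Lemma~\ref{prop1} (decoupling the edge-arrival randomness from the colouring coins via the analogue of $\tS,\bU,\dS$, and then a product-of-sums bound). This is not a fatal error --- the required concentration does hold, because $\P(\text{Bin}(z,q^{-1})\le q-1)$ decays exponentially in $z$ and $z$ is typically $\Theta(\log n)$, yielding failure probability $n^{-\Theta(\log\log n)}$ per degree class --- but it is a substantial piece of analysis that you have deferred entirely, and it is analysis the paper deliberately avoids. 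A smaller imprecision: you quote the per-vertex failure probability as $n^{-\Theta(\epsilon/q)}$, whereas for $E_\epsilon(v)$ the exponent is $\Theta\!\bigl(\log(1/\epsilon)/q\bigr)$, which \emph{improves} as $\epsilon\to 0$; you do eventually say the right thing (``$\log(1/\epsilon)$ dominates''), but the first expression points in the wrong direction. Your handling of $v\in Full$, of $W(v)$ (essentially the paper's Lemma~\ref{lemy}), of $E_\epsilon(v)$ (the paper's Lemma~\ref{lemz} does this more simply, by observing that too many early edges would force an improbably high degree at time $t_\epsilon$), and the balancing fact $D\ge B/q-1$ are all fine.
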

Suppose we define a vertex to be $small_c$ if it has $d_c(v) \leq \frac{\epsilon \log n}{1000q}$.
Theorem \ref{th2} says w.h.p. the set of $small_c$ vertices $SMALL_c \subset SMALL$ so that by Lemma \ref{lem1}, $G$ does not contain any $small_c$ structures of constant size. Here a $small_c$ structure is a small structure made up of $small_c$ vertices.

The proof of Theorem \ref{th2} will follow from Lemmas \ref{lemx}, \ref{lemy} and \ref{lemz} below.
\begin{lem}\label{lemx}
There exists $\delta = \delta(q)>0$ such that the following holds w.h.p.: Let $Full',Full$ be as in Definition \ref{Fudef}. Then $|Full'| \geq n-\frac{203 qn}{\epsilon \log n}$, and $|Full| \geq n-n^{1-\delta}$.
\end{lem}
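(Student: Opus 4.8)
The plan is to reduce Lemma \ref{lemx} to the single-vertex estimate that there is $\delta=\delta(q)>0$ with
\[
\P(v\notin Full)\le n^{-\delta}\quad\text{and}\quad\P(v\notin Full')\le n^{-\delta}\qquad\text{for every fixed }v\in[n].
\]
Given this, the expected number of vertices outside $Full$ (resp.\ outside $Full'$) is at most $n^{1-\delta}$, so Markov's inequality gives $|Full|,|Full'|\ge n-n^{1-\delta/2}$ w.h.p., which for $n$ large is stronger than both claimed bounds. So fix $v$, write $t$ for $t_\epsilon$ in the case of $Full$ and for $\tfrac12 t_\epsilon$ in the case of $Full'$, and fix $c\in[q]$; the task is to show $d_c^{(t)}(v)\ge\frac{\epsilon\log n}{1000q}$ with failure probability $n^{-\delta}/q$.

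The engine is a lower bound for $d_c^{(t)}(v)$. Call an edge $uv$ arriving at time $s\le t$ \emph{good for $c$} if either $v$ is needy at time $s$ with $c\in C_v^{(s)}$ (then $uv$ is coloured in Step 5 from a set containing $c$, hence gets colour $c$ with probability $\ge 1/q$), or $v$ and $u$ are both non-needy at time $s$ (then $uv$ is coloured in Step 4, since $s\le t\le t_\epsilon$, and gets colour $c$ with probability exactly $1/q$); in both cases the random bit used is fresh and independent of the past, by Remark \ref{rem1}. Revealing $v$'s edges and these bits in order, exactly as in the proof of Lemma \ref{prop1}, gives for any $m_0$ that $\P(d_c^{(t)}(v)<\frac{\epsilon\log n}{1000q})\le\P(M_c<m_0)+\P(\Bin(m_0,1/q)<\frac{\epsilon\log n}{1000q})$, where $M_c$ is the number of edges at $v$ that are good for $c$. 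Taking $m_0=\frac{\epsilon\log n}{200}$, the Chernoff bound makes the last probability $n^{-\Omega(\epsilon/q)}$, so it remains to show $M_c\ge\frac{\epsilon\log n}{200}$ except with probability $n^{-\Omega(\delta)}$.

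Every edge at $v$ that is not good for $c$ is either a Phase-1 edge at $v$ arriving after $v$ has already picked up colour $c$, or a Phase-2 edge of $v$ whose other endpoint is needy, so
\[
M_c\ \ge\ d_v^{(t)}\ -\ \#\{\text{edges at }v\text{ arriving while }v\text{ is needy}\}\ -\ \#\{\text{needy partners of }v\}.
\]
The first term is routine binomial concentration, $\P(d_v^{(t)}<\frac{\epsilon\log n}{95})\le n^{-\Omega(\epsilon)}$. The second is controlled by Remark \ref{rem1} again: while $v$ is needy each new edge supplies a missing colour with probability $\ge 1/q$, so the number of edges before $v$ acquires all $q$ colours is stochastically dominated by a sum of $q$ independent $\mathrm{Geom}(1/q)$ variables, hence exceeds $\frac{\epsilon\log n}{2000}$ with probability at most $\P(\Bin(\frac{\epsilon\log n}{2000},1/q)<q)=n^{-\Omega(\epsilon/q)}$.

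The main obstacle is the third term, the number of needy partners of $v$, i.e.\ neighbours $u$ with $C_u^{(t(uv))}\ne\emptyset$. I would not try to bound the global number of needy vertices over time; instead, fix a large constant $K=K(q)$ and split a needy partner $u$ according to whether $d_u^{(t(uv))}<K$ or $d_u^{(t(uv))}\ge K$. For partners of the second kind, Remark \ref{rem1} applied to $u$ shows that, conditional on the history just before $uv$, the probability that $u$ still misses a colour despite having $\ge K$ incident edges is $\le\P(\Bin(K,1/q)\le q-1)\le e^{-\Omega(K/q)}$; choosing $K$ large enough in terms of $q$ and $\epsilon$, a Chernoff bound over the at most $20\log n$ edges at $v$ (Lemma \ref{lem3}) shows at most $\frac{\epsilon\log n}{4000}$ of them occur, except with probability $n^{-\Omega(\epsilon)}$. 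For partners of the first kind, all but $O_q(1)$ of $v$'s edges arrive after the arrival time of its $(10K)$-th edge, which concentrates around $\approx 5Kn$, and at any such time a uniformly chosen endpoint has fewer than $K$ incident edges only with probability $e^{-\Omega(K)}$; the same Chernoff bound caps the number of partners of the first kind by $\frac{\epsilon\log n}{4000}+O_q(1)$, with the same failure probability. Adding up, $\#\{\text{needy partners of }v\}\le\frac{\epsilon\log n}{1000}$ except with probability $n^{-\delta}$, which together with the previous two estimates yields $M_c\ge\frac{\epsilon\log n}{200}$ and finishes the proof.
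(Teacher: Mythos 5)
Your route is genuinely different from the paper's, and it is worth noting where the two diverge. For the $Full'$ bound, the paper does not attempt a per-vertex estimate at all. Instead it uses a deterministic counting trick: every edge that is incident to a needy vertex decreases the global ``total colours needed'' count (initially $qn$) by at least one, so at most $qn$ of the first $t_\e/2$ edges have restricted colour, and by averaging at most $\frac{200qn}{\e\log n}$ vertices can have as many as $\frac{\e\log n}{200}$ restricted incident edges. That gives the $n-O(n/\log n)$ bound for $Full'$ essentially for free, with no probabilistic estimate on ``needy partners.'' The per-vertex probabilistic bound is used in the paper only for the second half of the interval (to get from $Full'$ to $Full$), precisely because by time $t_\e/2$ one already knows $Full'$ is large, so almost every partner of $v$ is already non-needy and the colour of the new edge is determined purely by $v$'s own bit. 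You aim instead to prove the per-vertex bound $\P(v\notin Full')\le n^{-\delta}$ directly, which, if it held, would give a cleaner and stronger conclusion ($|Full'|\ge n-n^{1-\delta}$, improving the $O(n/\log n)$ loss).

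However, as written, your control of the third term (the needy partners) has real gaps. First, for partners $u$ with $d_u^{(t(uv))}\ge K$, you invoke Remark \ref{rem1} and then ``a Chernoff bound over the at most $20\log n$ edges at $v$.'' But the events $A_i=\{u_i \text{ needy at }t(u_iv)\}$ are not independent, and you do not establish the stochastic domination needed to justify a Binomial tail bound. This is exactly the technical content of the chain of conditional estimates in the proof of Lemma \ref{prop1} (the peeling argument in \eqref{f1} together with Claim \ref{cl1}): one must condition on $\tS,\bU,\dS,\cL$ and peel off the $A_{i_j}$ one at a time, each bounded by $g(z_j)=\P(\Bin(z_j,q^{-1})\le q-1)$, and then control the joint law of the $Z_j$. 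Citing Remark \ref{rem1} alone does not license a Chernoff bound; you would have to import (and adapt to the shorter time horizon $t_\e/2$) essentially the entire analytic machinery of Lemma \ref{prop1}. Second, for partners $u$ with $d_u^{(t(uv))}<K$, the claim that ``a uniformly chosen endpoint has $<K$ edges with probability $e^{-\Omega(K)}$'' and then another Chernoff bound is again applied to a dependent family: whether successive partners of $v$ have low degree at their arrival times is a structural property of the random graph process, and these indicator variables share the ordering of all edges. You need either a union bound over structures, an explicit coupling, or an argument that these are negatively associated; none is supplied. In short, the approach is plausible but the two Chernoff invocations are not yet proofs; the paper's counting argument for $Full'$ sidesteps both difficulties entirely and is considerably more elementary.
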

\begin{proof}
We first note that for $v\in [n]$, that if $t_\e=\epsilon n \log n$ then
\beq{deg1}{
\P\brac{d^{(t_\e/2)}(v)< \l_0:=\frac{\epsilon \log n}{100}}\leq 3n^{-\e/2} < n^{-\e/3}.
}
Indeed, with $p_1 =\frac{t_{\e}/2}{\binom{n}{2}}$ we see that, in the random graph model $G_{n,p_1}$:
\beq{deg2}{
\P\brac{d(v)<\l_0}=\sum_{i=0}^{\l_0-1}\binom{n}{i}p_1^i(1-p_1)^{n-i}\leq 2\binom{n}{\l_0}p_1^{\l_0}(1-p_1)^{n-\l_0}\leq 2\bfrac{nep_1}{\l_0}^{\l_0}n^{-\e+o(1)}\leq n^{-\e/2}.
}
The first inequality follows from the fact that the ratio of succesive summands in the sum is at least $(n-\l_0)p_1/\l_0>50$.

Equation \eqref{deg1} now follows from \eqref{Prop2} (with $p$ replaced by $p_1$) and \eqref{deg2}.

Thus the Markov inequality shows that with probability at least $1-n^{-\e/3}$, at least $n-n^{1-\e/6}$ of the vertices $v$ have $d^{(t_\e/2)}(v)\geq \frac{\epsilon \log n}{100}$. Now note that at most $qn$ of the first $t_\e/2$ edges were restricted in color by being incident to at least one needy vertex. This is because each time a needy vertex gets an edge incident to it, the total number of needed colors decreases by at least one. Therefore at most $\frac{200qn}{\epsilon  \log n}$ of these vertices $v$ have fewer than $\frac{\epsilon \log n}{200}$ of their $\geq \frac{\epsilon \log n}{100}$ initial edges colored completely at random, as in Step 4 of COL. Hence, there are at least $n-\frac{201qn}{\epsilon\log n}$ vertices $v$ which have $d^{(t_\e/2)}(v)\geq \frac{\epsilon \log n}{100}$ and $\frac{\e \log n}{200}$ edges of fully random color. For such a $v$, and any color $c$,
\beq{zz2}{
\P\left(d_c^{(t_\e/2)}(v)<\frac{\epsilon \log n}{1000q}\right)\leq \P \left(Bin\left(\frac{\epsilon \log n}{200}, \frac{1}{q} \right) \leq \frac{\epsilon \log n}{1000q} \right) \leq \exp\set{-\frac12\cdot \frac{16}{25}\cdot\frac{\epsilon \log n}{200q}}\leq n^{-\e/1000q}.
}
So $\P(v \notin Full') \leq qn^{-\e/1000q}$, and the Markov inequality shows that  w.h.p.
\beq{zz1}{
|Full'| \geq n-\frac{201qn}{\epsilon\log n}-n^{1-\e/2000q}\geq n-\frac{202 q n }{\epsilon \log n}.
}
Now, for $v \notin Full'$, let $S(v):=Full' \setminus N^{(t_\e/2)}(v)$. Since $d^{(t_\e/2)}(v)\leq d(v)
\leq 20 \log n$, we have $|S(v)| \geq n-\frac{203 q n }{\epsilon \log n}$. Furthermore,  every $w \in S(v)$ is no longer needy, and so among the next $t_\e/2$ edges, at most $q$ of the edges between $v$ and $S(v)$ have their choice of color restricted by $v$, and the rest are colored
randomly as in Step 4 of COL. Now $\P(|e(v,S(v))|<\frac{\epsilon \log n}{100})=O(n^{-\e/3})$ by a similar calculation to \eqref{deg2}. Conditioning on $|e(v,S(v))|\geq \frac{\epsilon \log n}{100}$
we have $\P(v \notin Full) \leq qn^{-\e/1000q}$ by a similar calculation to \eqref{zz2} and so
$|Full| \geq n-n^{1-\e/2000q}$ with probability $\geq 1-O(n^{-\e/2000q})$ by the Markov inequality.
\end{proof}

We are working towards showing that vertices with low degree in some color must have have a low overall degree.
The point is that all $Full$ vertices no longer need additional colors later than $t_{\epsilon}=\epsilon n \log n$, so any new edge connecting $Full$ to $V \setminus Full$ after time $t_\e$ has its color determined by the vertex not in $Full$, as in Step 2 of COL.
Indeed, suppose a vertex $v \notin Full$ has at least $\frac{ \epsilon \log n}{400}$ edges to $Full$ after time $t_\e$. Then $v$ gets at least $\frac{\epsilon \log n}{400q}>\frac{\epsilon \log n}{1000q}$ edges of every color incident with it.
\begin{lem}\label{lemy}
W.h.p. there are no vertices $v \notin Full$ with at least $\frac{\epsilon \log n}{200}$ edges after time $t_\epsilon$ i.e., $d^{(m)}(v)-d^{(t_\e)}(v)\geq\frac{\epsilon \log n}{200}$ but with at most $\frac{\epsilon \log n}{400}$ of these edges to $Full$.
\end{lem}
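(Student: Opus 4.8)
This is a routine first moment argument. The point to exploit is that the set $Full$ is determined by the process \emph{at} time $t_\e$, whereas the edges to be counted all arrive strictly \emph{after} $t_\e$.

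First I would record that $Full$ is measurable with respect to $\mathcal{F}_{t_\e}$, the $\sigma$-algebra generated by the first $t_\e$ edges of the process together with all random bits used by COL up to time $t_\e$; this is immediate from Definition~\ref{Fudef}, which refers only to time-$t_\e$ data. By Lemma~\ref{lemx} the $\mathcal{F}_{t_\e}$-measurable event $\mathcal{G}:=\{|Full|\ge n-n^{1-\delta}\}$ holds w.h.p.\ (with $\delta=\delta(q)>0$ as there). I will condition on $\mathcal{F}_{t_\e}$, assume $\mathcal{G}$ holds, and set $W:=[n]\setminus Full$, a \emph{fixed} set with $|W|\le n^{1-\delta}$.

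Next comes the reduction: if a vertex $v$ violates the conclusion, then among its $\ge \tfrac{\epsilon\log n}{200}$ edges arriving after time $t_\e$, at most $\tfrac{\epsilon\log n}{400}$ go to $Full$, so at least $k:=\lceil\tfrac{\epsilon\log n}{400}\rceil$ of them go to $W$. Hence it suffices to show that w.h.p.\ no $v\in[n]$ receives $k$ or more edges into the fixed set $W$ among $e_{t_\e+1},\dots,e_m$. Conditionally on $\mathcal{F}_{t_\e}$, these edges form a uniformly random $(m-t_\e)$-subset of the $\binom n2-t_\e$ remaining non-edges, so the number of them incident with $v$ and landing in $W$ is hypergeometric, stochastically dominated by $\Bin(|W|,\tfrac{5\log n}{n})$, whose mean is at most $5n^{-\delta}\log n=o(1)$. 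Therefore
$$
\P\bigl(v\text{ has}\ge k\text{ edges to }W\mid\mathcal{F}_{t_\e}\bigr)\le\binom{|W|}{k}\bfrac{5\log n}{n}^{k}\le\bfrac{5e\,n^{-\delta}\log n}{k}^{k}\le\bfrac{2000e}{\epsilon}^{\epsilon(\log n)/400}n^{-\delta\epsilon(\log n)/400},
$$
which is $n^{-\Omega(\log n)}$. A union bound over the $n$ choices of $v$ keeps this $o(1)$ (indeed $e^{-\Omega(\log^2 n)}$); since this estimate depends on $W$ only through $|W|\le n^{1-\delta}$, it holds uniformly over all of $\mathcal{G}$. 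Combining with $\P(\neg\mathcal{G})=o(1)$ from Lemma~\ref{lemx} proves the lemma.

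The computation is elementary; the only genuine subtlety is the order of exposure. One must fold $W$ into $\mathcal{F}_{t_\e}$ \emph{before} revealing $e_{t_\e+1},\dots,e_m$, so that those edges are independent of the small target set $W$. This is exactly what makes a (ruinous) union bound over all $\binom{n}{\le n^{1-\delta}}$ possible values of $W$ unnecessary.
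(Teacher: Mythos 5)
Your proof is correct and uses essentially the same approach as the paper: condition on the $\mathcal{F}_{t_\e}$-measurable set $Full$ (so that $W=[n]\setminus Full$ is a fixed set of size at most $n^{1-\delta}$), reduce the bad event to having at least $\epsilon\log n/400$ post-$t_\e$ edges from $v$ into $W$, bound that probability, and union-bound over $v$, noting that no extra factor over choices of $Full$ is needed precisely because $Full$ is determined before the relevant edges arrive. The only cosmetic difference is that the paper restricts attention to the first $\epsilon\log n/200$ post-$t_\e$ edges of $v$ and bounds the probability that at least half go to $W$, whereas you bound the total count of post-$t_\e$ edges to $W$ by stochastic domination with a $\Bin(|W|,5\log n/n)$; both yield the same $n^{-\Omega(\log n)}$ tail.
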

\begin{proof}
Take any vertex $v \notin Full$ and consider the first $\frac{\epsilon \log n}{200}$ edges incident to $v$ after time $t_\epsilon$. We must estimate the probability that at least half of these edges are to vertices not in $Full$. We bound this by
$$\binom{\e\log n/200}{\e\log n/400}\bfrac{n^{1-\delta}}{n-20\log n}^{\e\log n/400}=o(1/n).$$
We subtract off a bound of $20\log n$ on the number of edges from $v$ to $Full$ in $E_{t_\e}$. Note that we do not need to multiply by the number of choices for $Full$, as $Full$ is defined by the first $t_\e$ edges. There at most $n$ choices for $v$ and so the lemma follows.
\end{proof}
\begin{lem}\label{lemz}
There are no \emph{large} vertices $v$ with $d^{(m)}(v)-d^{(t_\epsilon)}(v)<\frac{\epsilon \log n}{200}$.
\end{lem}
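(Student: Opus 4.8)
The plan is to show that a \emph{large} vertex $v$ accumulates $\Omega(\log n)$ of its edges after time $t_\epsilon=\epsilon n\log n$, so that the complementary event forces $v$ to be \emph{small}. Recall that $v\in LARGE$ means $d^{(m)}(v)\geq\frac{\log n}{100q}$. So it suffices to prove: w.h.p. there is no vertex $v$ with $d^{(t_\epsilon)}(v)>\frac{\log n}{100q}-\frac{\epsilon\log n}{200}$ and fewer than $\frac{\epsilon\log n}{200}$ further edges in $(t_\epsilon,m]$. Since $\epsilon$ is a small constant (in particular $\frac{\epsilon}{200}<\frac{1}{100q}$ after we fix $\e$), the first condition just says $d^{(t_\epsilon)}(v)$ is already a constant multiple of $\log n$, i.e. $v$ is not terribly small at time $t_\epsilon$; equivalently, the event we must rule out is that $v$ gets essentially no new edges over a window of $(1-\epsilon)n\log n$ additional time steps despite having plenty of half-edges unused.

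The main computation is a union bound over $v$. Transfer to $G_{n,p}$ via \eqref{Prop2}: the relevant event is monotone-type after splitting into the two independent "halves" of the edge set. Concretely, use the two-round exposure $G_{n,p}=G_{n,p_1}\cup G_{n,p_2}$ with $p_1$ corresponding to $t_\epsilon$ edges and $p_2$ to the remaining $\approx(1-\epsilon)$-fraction, so $p_2=\Theta\bfrac{\log n}{n}$. Fixing $v$, I condition on $d^{(t_\epsilon)}(v)=d_1$ with $d_1\geq\frac{\log n}{100q}-\frac{\epsilon\log n}{200}=:D$; then the number of new neighbours of $v$ in the second round stochastically dominates $\Bin(n-1-d_1,p_2)$, which has mean $\Theta(\log n)\gg\frac{\epsilon\log n}{200}$ once $\e$ is small relative to $1/q$. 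A Chernoff bound gives
$$
\P\left(\Bin(n-1-d_1,p_2)<\tfrac{\epsilon\log n}{200}\right)\leq n^{-c}
$$
for an absolute constant $c>1$ (the mean is $(1-\epsilon-o(1))\log n$ and we want it to dip below $\frac{\epsilon\log n}{200}$, an event of probability $n^{-(1-\epsilon-o(1))(1-o(1))}$ which is $o(n^{-1})$ once $\epsilon$ is small). Summing over the $n$ choices of $v$ kills the union bound, and \eqref{Prop2} transfers the conclusion back to $G_{n,m}$.

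One must be slightly careful that "large at time $m$" is the hypothesis, not "large at time $t_\epsilon$", so I would phrase the union bound over the complementary bad event directly: a vertex $v$ with $d^{(m)}(v)-d^{(t_\epsilon)}(v)<\frac{\epsilon\log n}{200}$ \emph{and} $d^{(m)}(v)\geq\frac{\log n}{100q}$ must have $d^{(t_\epsilon)}(v)\geq\frac{\log n}{100q}-\frac{\epsilon\log n}{200}\geq\frac{\log n}{200q}$ (for $\e$ small). Then the event "$v$ receives fewer than $\frac{\epsilon\log n}{200}$ edges after $t_\epsilon$" has probability $o(n^{-1})$ regardless of the identity of the $d^{(t_\epsilon)}(v)$ first neighbours, by the second-round Chernoff bound above, since the number of non-neighbours of $v$ at time $t_\epsilon$ is $n-1-O(\log n)$ by Lemma~\ref{lem3}. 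Union over $v\in[n]$ finishes it.

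The only real obstacle is bookkeeping: making sure the constants line up so that the second-round mean $(1-\epsilon)\log n$ genuinely exceeds $\frac{\epsilon\log n}{200}$ by enough that the Chernoff exponent beats $\log n$, and that this is compatible with the already-chosen small $\epsilon=\epsilon(q)$ from Definition~\ref{Fudef} and Lemma~\ref{lemx}; this is automatic as soon as $\epsilon$ is small enough depending on $q$, which we are free to assume. Everything else is a routine two-round exposure plus Chernoff plus union bound, exactly in the spirit of \eqref{deg1}–\eqref{deg2} in the proof of Lemma~\ref{lemx} and the argument of Lemma~\ref{lemy}.
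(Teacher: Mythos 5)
Your plan has a genuine gap at the crucial probability estimate. You reduce to ruling out vertices with $d^{(t_\epsilon)}(v)\ge\frac{\log n}{200q}$ \emph{and} fewer than $\frac{\epsilon\log n}{200}$ post-$t_\epsilon$ edges, but then you drop the first-round condition entirely and try to win from a union bound on the second-round event alone. That cannot work. The second-round degree of a fixed vertex has mean roughly $(1-2\epsilon)\log n$, so the probability that it falls below $\frac{\epsilon\log n}{200}$ is governed by the lower tail near zero: it is of order $e^{-(1-2\epsilon)\log n}\cdot\text{poly}(\log n)=n^{-(1-2\epsilon)+o_\epsilon(1)}$. You assert this is $o(n^{-1})$ ``once $\epsilon$ is small,'' but the exponent $-(1-2\epsilon)+o_\epsilon(1)$ is strictly greater than $-1$ for every fixed $\epsilon>0$; making $\epsilon$ smaller only pushes it \emph{toward} $-1$, never past it. So the expected number of vertices with low second-round degree is $n^{\Theta_\epsilon(1)}\to\infty$, the union bound fails, and in fact there genuinely are such vertices — they simply are not \emph{large}, which your argument never exploits.

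The paper's proof avoids this entirely by bounding the \emph{other} necessary condition. A large vertex with few post-$t_\epsilon$ edges must already have had degree at least $\frac{\log n}{200q}$ at time $t_\epsilon$. But $d^{(t_\epsilon)}(v)$ has mean about $2\epsilon\log n$, far below $\frac{\log n}{200q}$ for small $\epsilon$, so the relevant \emph{upper} tail is $\binom{n}{\log n/200q}p_2^{\log n/200q}\le(400qe\epsilon)^{\log n/200q}$, and choosing $\epsilon$ small enough makes this $o(n^{-2})$ — a bound that genuinely improves as $\epsilon\to0$ because you are overshooting a threshold that scales like $1/q$, not like $\epsilon$. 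That one estimate plus a union bound finishes the lemma; no two-round exposure or lower-tail Chernoff is needed. If you want to salvage a version of your argument, you would have to multiply the (small) probability of the first-round event by the probability of the second-round event rather than discarding the former, but at that point the first-round bound alone already does the job.
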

\begin{proof}
Any $v$ satisfying these conditions must have $d^{(t_\epsilon)}(v) \geq \frac{\log n }{200q}$, if $\epsilon \leq 1/q$ say.
However, with $p_2=\frac{t_\e}{\binom{n}{2}}$ we have that in the random graph model $G_{n,p_2}$,
\beq{qdef}{
\P\left(d(v) \geq\frac{\log n}{200q} \right)\leq \binom{n}{\log n/200q}p_2^{\log n/200q} \leq (400qe\e)^{\log n/200q}=o(n^{-2}),
}
for $\e$ sufficiently small.

The result follows by taking a union bound over choices of $v$ and using \eqref{Prop2} (again noting $\binom{n}{2}p_2 = t_\epsilon \rightarrow \infty$).
\end{proof}
{\bf Proof of Theorem \ref{th2}:}  It follows from Lemmas \ref{lemy}, \ref{lemz} that every large vertex has at least $\frac{\epsilon \log n}{400}$ edges to $Full$ that occur after time $t_\epsilon$. These edges will provide all needed edges of all colors.
\qed
\medskip

It is known that w.h.p. $m\leq \t_{q}\leq m'=m+2\omega n$, see Erd\H{o}s and R\'enyi \cite{ER61}. We have shown that at time $m$ all vertices, other than vertices of degree $q-1$, are incident with edges of all colors. Furthermore, vertices of degree $q-1$ are only missing one color. As we add the at most $2\omega n$ edges needed to reach $\t_{q}$ we find (see Claim \ref{clq} below) that w.h.p. the edges we add incident to a vertex $v$ of degree $q-1$ have their other end in LARGE. As such COL will now give vertex $v$ its missing color.
\begin{claim}\label{clq}
W.h.p. an edge of $E_{m'}\setminus E_m$ that meets a vertex of degree $q-1$ in $G_m$ has its other end in LARGE.
\end{claim}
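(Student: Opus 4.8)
The plan is to reduce the statement to a one‑line first‑moment estimate over the $2\omega n$ edges that the process adds between times $m$ and $m'$. Write $D=\{v:d^{(m)}(v)=q-1\}$ for the set of vertices we worry about and recall $SMALL=\{v:d^{(m)}(v)<\frac{\log n}{100q}\}$. Since $q$ is a fixed constant we have $q-1<\frac{\log n}{100q}$ for $n$ large, so $D\subseteq SMALL$. Consequently it suffices to prove the (formally stronger) statement that w.h.p. \emph{no} edge of $E_{m'}\setminus E_m$ has both endpoints in $SMALL$; the claim as stated follows, because an edge forbidden by the claim has one endpoint in $D\subseteq SMALL$ and the other outside $LARGE$, hence in $SMALL$.

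First I would bound $|SMALL|$. It is well known that w.h.p. $\delta(G_m)=q-1$ (see \cite{FK}), so w.h.p. every vertex of $SMALL$ has degree in $[q-1,\frac{\log n}{100q})$. Lemma \ref{lem4} together with the Remark following it then gives that w.h.p.
$$|SMALL|\ \le\ \frac{\log n}{100q}\cdot\max_{q-1\le k\le \log n/(100q)}\n_k\ =\ n^{\beta+o(1)},\qquad\text{where }\ \beta:=\frac{\log(100eq)}{100q}.$$
The only feature of $\beta$ that I need is $2\beta<1$; in fact $\beta<\frac1{20}$ once $q=2\s\ge4$.

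Next I would condition on $G_m$, assuming it satisfies $|SMALL|\le n^{\beta+o(1)}$ (and $\delta(G_m)=q-1$), which holds w.h.p. Since $m'-m=2\omega n$, the set $E_{m'}\setminus E_m$ consists of exactly $2\omega n$ edges which, conditioned on $G_m$, form a uniformly random subset of the $\binom n2-m\ge\frac12\binom n2$ non‑edges of $G_m$; hence each fixed non‑edge of $G_m$ lies in $E_{m'}\setminus E_m$ with probability $\frac{2\omega n}{\binom n2-m}\le\frac{9\omega}{n}$. Therefore the expected number of edges of $E_{m'}\setminus E_m$ with both endpoints in $SMALL$ is at most
$$\binom{|SMALL|}{2}\cdot\frac{9\omega}{n}\ \le\ n^{2\beta+o(1)}\cdot\frac{9\omega}{n}\ =\ 9\omega\, n^{2\beta-1+o(1)}\ =\ o(1),$$
using $2\beta<1$ and $\omega=o(\log\log n)=n^{o(1)}$. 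Markov's inequality now shows there is w.h.p. no such edge, which proves the claim. (This is exactly what the main argument needs: since w.h.p. $\t_q\le m'$, every edge colored by COL up to time $\t_q$ lies in $E_{m'}$, and the claim guarantees that when a degree‑$(q-1)$ vertex $v$ of $G_m$ finally receives a new edge $vw$, the endpoint $w$ lies in $LARGE$; by Lemma \ref{prop1} $w$ is then already incident to all $q$ colors, so Step 5 of COL colors $vw$ with the unique color missing at $v$.)

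I do not expect a genuine difficulty here; the argument is essentially a single union bound. The two places to be careful are the arithmetic — verifying that the bound on $|SMALL|$ supplied by Lemma \ref{lem4} really is a power $n^{\beta}$ with $2\beta<1$, so that squaring it and multiplying by $\omega/n$ still leaves $o(1)$ — and the (routine) probabilistic fact that, conditioned on $G_m$, the edges of $E_{m'}\setminus E_m$ form a uniformly random set of non‑edges of $G_m$, which is what legitimises the $O(\omega/n)$ per‑edge inclusion bound.
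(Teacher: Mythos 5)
Your proof is correct and uses essentially the same first-moment/union-bound approach as the paper: bound the number of potentially bad vertices via Lemma~\ref{lem4}, then sum the per-pair inclusion probability over the $2\omega n$ edges arriving between times $m$ and $m'$. The only difference is in the bookkeeping: the paper bounds the number of degree-$(q-1)$ vertices separately by $e^{2\omega}$ and multiplies it by an $n^{1/3}$ bound on $|SMALL|$, whereas you observe $D\subseteq SMALL$ and square the bound $|SMALL|\le n^{\beta+o(1)}$, relying on $2\beta<1$. Both give $o(1)$ comfortably since $\beta<1/20$; the paper's version is slightly sharper (needing only $\beta<1$) but yours is a touch cleaner, and the subtle point you flag — that conditionally on $G_m$ the set $E_{m'}\setminus E_m$ is uniform over non-edges of $G_m$, justifying the $O(\omega/n)$ per-pair probability — is correct and is used implicitly by the paper too.
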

\begin{proof}
It follows from Lemma \ref{lem4} that at time $m$ and later there are w.h.p. at most $e^{2\omega}$ vertices of degree $q-1$ and at most 
$$M=\sum_{k=q-1}^{\log n/100q}\frac{e^{2\omega}(\log n)^{k-q+1}}{(k-1)!}\leq 2e^{2\omega}\bfrac{e\log n}{\log n/100q}^{-q+1+\log n/100q}\leq n^{1/3}$$ 
vertices in SMALL. (The first inequality follows from the fact that the summands grow by a factor of at least $100q$.) 

Thus the probability that there is an edge contradicting the claim is at most
$$2\omega n\times \frac{e^{2\omega}\times n^{1/3}}{\binom{n}{2}-m'}=o(1).$$
\end{proof}
We remind the reader that $q=2\s$ where we only use $\s$ colors. We apply the above analysis by identifying colors $mod \; \s$. We therefore have the following:  
\begin{cor}\label{cord}
W.h.p. the algorithm COL applied to $G_{\t_{2\s}}$ yields a coloring for which $d_c^*(v)\geq 2$ for all $v\in [n]$.
\end{cor}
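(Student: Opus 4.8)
The plan is to read the corollary off from Lemma~\ref{prop1} and Claim~\ref{clq}, together with the color-merging described just before the statement. Recall $q=2\s$, and that a genuine color $c\in[\s]$ is the union of colors $c$ and $c+\s$ of the $q$-coloring; thus the quantity appearing in the corollary equals $d^*_c(v)+d^*_{c+\s}(v)$ in the notation of Theorem~\ref{th2}. Hence it suffices to prove that w.h.p. $d^*_{c'}(v)\ge 1$ for every $v\in[n]$ and every $c'\in[q]$, since summing the two bounds attached to a merged class then yields the required $\ge 2$.

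First I would record that w.h.p. every vertex has all $q$ colors present among its incident edges at time $\t_{2\s}$, i.e. $d^{(\t_{2\s})}_{c'}(v)\ge 1$ for all $v,c'$. For a vertex of degree $\ge q$ in $G_m$ this is immediate from Lemma~\ref{prop1}, which gives $C_v^{(m)}=\emptyset$. For a vertex $v$ of degree $q-1$ in $G_m$, Claim~\ref{clq} says the first edge added at $v$ after time $m$ has its other endpoint $u$ in $LARGE$; since $C_u^{(m)}=\emptyset$ by Lemma~\ref{prop1}, when this edge appears COL is in the ``otherwise'' case (Step~5) and colors it with the unique color of $C_v^{(t)}$, adding it to $E^*$. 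As w.h.p. $m\le\t_{2\s}\le m'$ and $\delta(G_m)=q-1$, this covers all vertices.

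The one remaining point --- and the only real obstacle --- is to check that the restriction to $E^*$ (equivalently, the exclusion of Step~3 edges) in the definition of $d^*$ never removes the last edge of a color at a vertex. Fix $v$ and $c'$. If some color-$c'$ edge at $v$ was not colored in Step~3 we are done, so suppose every color-$c'$ edge at $v$ was colored in Step~3; in particular $v$ has at least one such edge. Since Step~3 colors an edge $uv$ only when $t>t_\epsilon$ and both $u,v\in Full$, we get $v\in Full$. But then, by definition of $Full$, $v$ already has $d^{(t_\epsilon)}_{c'}(v)\ge\frac{\epsilon\log n}{1000q}\ge 1$ edges of color $c'$ at time $t_\epsilon$, and since Steps~2 and~3 both require $t>t_\epsilon$, each edge present at time $t_\epsilon$ was colored in Step~4 or~5 and hence is not a Step-3 edge --- a contradiction. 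Therefore $d^*_{c'}(v)\ge 1$ for all $v\in[n]$, $c'\in[q]$, and merging colors modulo $\s$ gives $d^*_c(v)\ge 2$ for all $v\in[n]$, $c\in[\s]$. Every invoked statement holds w.h.p., so the conclusion does too. (Alternatively, for $v\in LARGE$ one may simply quote the much stronger bound $d^*_{c'}(v)\ge\frac{\epsilon\log n}{1000q}$ of Theorem~\ref{th2}, but this is not needed.)
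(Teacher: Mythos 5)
Your proof is correct and follows essentially the same route as the paper: establish $d_{c'}(v)\ge 1$ for all $q$ original colors via Lemma~\ref{prop1} and Claim~\ref{clq}, observe that the first color-$c'$ edge at any vertex cannot be a Step-3 (i.e.\ $E^+$) edge because a $Full$ vertex already has color-$c'$ edges from before $t_\epsilon$, and then merge colors modulo $\s$ to get $d_c^*(v)\ge 2$. The paper states this more tersely ("the first edge incident with $v$ that gets color $c$ will be in $E_c^*$, since $E^+$ edges only join two full vertices"), and you have simply supplied the short contradiction argument that justifies that parenthetical remark.
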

\begin{proof}
We can see from the above that w.h.p. at time $\t_{2\s}$ we have that $d_c(v)\geq 2$ for all $c\in [\s],v\in [n]$. Furthermore, by construction, for each $c\in [q], v\in [n]$ the first edge incident with $v$ that gets color $c$ will be in $E_c^*$. (The only time we place an edge in $E_c^+$ is when it joins two full vertices.) 
\end{proof}
{From} now on we think in terms of $\s$ colors.

\subsection{Expansion}
For a set $S\subseteq [n]$ we let
$$N_c^*(S)=\set{v\notin S:\exists u\in S\ s.t.\ uv\in E_c^*} \subset N_c(S).$$
Let 
$$\alpha = \frac{1}{10^6 q}.$$
\begin{lem}\label{th3}
Then w.h.p. $|N_c^*(S)| \geq 19|S|$ for all $S \subset LARGE$, $|S| \leq \alpha n$.
\end{lem}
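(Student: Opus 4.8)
\textbf{Proof proposal for Lemma \ref{th3}.}

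The plan is to run a standard first-moment union bound over all ``bad'' sets $S\subset LARGE$ with $|S|\le \alpha n$ and $|N_c^*(S)|<19|S|$, exploiting that a constant fraction of edges incident to $LARGE$-vertices land in $E_c^*$ with probability $\ge 1/q$ each, independently of the graph process (this is the key mechanism already used in Remark \ref{rem1} and in the proof of Lemma \ref{lemx}, and it is the same random-bits-independent-of-the-graph principle). First I would fix a color $c$ and a target: if $|N_c^*(S)|<19|S|$ for some $S$ of size $s$, then there is a set $T$ with $|T|\le 19s$ such that every $E_c^*$-edge out of $S$ goes into $S\cup T$. I would work inside $G_{n,m}$ (or $G_{n,p}$, passing back via \eqref{Prop2}), first conditioning on the high-probability structural events already established: Lemma \ref{lem3} (max degree $\le 20\log n$), Lemma \ref{lem4} (few low-degree vertices), and Corollary \ref{cord}. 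Since $S\subset LARGE$, each $v\in S$ has $d(v)\ge \frac{\log n}{100q}$, and by Theorem \ref{th2}/Corollary \ref{cord} essentially all of these edges are candidates to be placed in some $E^*_{c'}$; I want to say that, conditional on the sequence of edges incident to $S$, each such edge independently gets color $c$ and is placed in $E_c^*$ with probability at least some constant $\beta=\beta(q)>0$ (this uses Steps 2--5 of COL: in Steps 3,4,5 the color is chosen with probability $\ge 1/q$, and in Step 3 one still lands in $E_c^*$ with probability $1/2$; Step 2 edges to $LARGE$ are the delicate case — but those get the color minimising $d_c(v)$, and by the end of COL all colors are well-represented, so with enough care this still contributes a positive-probability ``miss'' outside $S\cup T$).

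The core estimate: fix $S$ with $|S|=s$ and $T\supseteq N_c^*(S)$ with $|T|=\lceil 19s\rceil$. Let $m_S$ be the number of edges of $G_m$ with at least one endpoint in $S$; since $S\subset LARGE$ we have $m_S\ge \frac{s\log n}{200q}$ (each vertex contributes $\ge\frac{\log n}{100q}$, divide by $2$ for double counting within $S$). Among those edges, the ones with the \emph{other} endpoint outside $S\cup T$ — call there be $m_S'$ of them — each independently fails to be placed in $E_c^*$ with probability $\le 1-\beta$. For the bad event we need \emph{all} $m_S'$ of them to avoid $E_c^*$, contributing a factor $(1-\beta)^{m_S'}$. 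The only thing to rule out is that most edges out of $S$ stay inside $S\cup T$; but $|S\cup T|\le 20s\le 20\alpha n = \frac{n}{5\cdot10^4 q}$, so the number of edges of $G_m$ inside $S\cup T$ is, w.h.p. (again by a separate, easy first-moment bound on dense small sets in $G_{n,p}$ with $p=\tilde O(1/n)$), at most, say, $\frac{s\log n}{400q}$ — hence $m_S'\ge m_S - (\text{edges inside }S\cup T) \ge \frac{s\log n}{400q}$. Thus the probability that $S$ is bad with this particular $T$ is at most $(1-\beta)^{s\log n/400q}=n^{-\beta' s}$ for $\beta'=\beta'(q)>0$.

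Finally I union bound: $\sum_{s=1}^{\alpha n}\binom{n}{s}\binom{n}{\lceil 19s\rceil} n^{-\beta' s} \le \sum_{s\ge 1}\bigl(e n/s\bigr)^{s}\bigl(en/s\bigr)^{20s} n^{-\beta' s}$, and I choose $\alpha=\frac{1}{10^6 q}$ (and $\e$) small enough that for $s\le\alpha n$ the factor $\bigl(en/s\bigr)^{21}\le (e/\alpha)^{21}\cdot(n/s)^{\text{small power of }n}$... more carefully, write $\binom{n}{s}\binom{n}{19s}\le \bigl(\frac{en}{s}\bigr)^{s}\bigl(\frac{en}{19s}\bigr)^{19s}$ and note $\bigl(\frac{en}{s}\bigr)^{20s}\le n^{20s}$ trivially, which is \emph{not} enough; so instead I separate into the range $s\le n^{0.9}$, where $\binom{n}{s}\binom{n}{19s}\le n^{20s}$ but $n^{-\beta' s}$ must be improved — here I use that $S\subset LARGE$ forces $m_S'\ge \Omega(s\log n)$ so the per-set bound is $n^{-\beta' s}$ with $\beta'$ as large as we like by shrinking $\e$, beating $n^{20s}$ — and the range $n^{0.9}\le s\le\alpha n$, where I instead use $m_S'=\Omega(s\log n)$ giving $(1-\beta)^{\Omega(s\log n)}$ against $\binom{n}{s}\binom{n}{19s}\le 2^{O(s\log(n/s))}=2^{O(s\log n)}$, and again shrink $\e$. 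Summing the resulting geometric series gives $o(1)$, then take a union over the $q$ colors. The main obstacle I anticipate is the conditioning bookkeeping in the core estimate: one must argue that, conditioned on which edges of the \emph{random graph} touch $S$ and on all structural events, the color-and-bucket assignments of edges from $S$ to the complement of $S\cup T$ really are (stochastically lower-bounded by) independent $\beta$-coins — the subtlety being Step 2 edges to $LARGE$ and the fact that the set $Full$, the sets $SMALL_c$, and the identity of $T$ are themselves random; the clean way around this is the same device as Remark \ref{rem1}, namely reveal the random graph first, note $|Full|\ge n-n^{1-\delta}$ and $LARGE$-structure deterministically, and only then expose COL's internal coins, which are independent of everything graph-theoretic.
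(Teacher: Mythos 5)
Your union bound, as sketched, fails quantitatively in the small--$S$ regime, and this is not a bookkeeping issue. For $|S|=s$ the number of candidate pairs $(S,T)$ is $\binom{n}{s}\binom{n}{\lceil 19s\rceil}\approx n^{20s}$. Against this you want a per--pair failure probability $(1-\beta)^{m_S'}$ with $m_S'=\Theta(s\log n)$. But $\beta$ is at most $1/q$ (it is the chance a single edge is colored $c$ and lands in $E^*_c$), and for a worst-case LARGE vertex the degree is as low as $\frac{\log n}{100q}$, so $(1-\beta)^{m_S'}\geq n^{\,s\log(1-1/q)/(100q)}=n^{-c\,s}$ with $c=c(q)$ a small constant — e.g.\ $c<0.01$ already for $q=4$. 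This cannot beat $n^{20s}$, no matter how you shrink $\e$: $\e$ controls $t_\e$, not the degree threshold for LARGE nor the probability of a given color, so neither $\beta$ nor $m_S'$ improves. The paper avoids this by treating small $S$ (up to $n/(\log n)^4$, Case 1) with a purely \emph{deterministic} counting argument: given Theorem~\ref{th2}, every $v\in S$ has $d_c^*(v)\geq\frac{\e\log n}{1000q}$, and given Claim~\ref{clprev} the sets $S$ and $S\cup N_c^*(S)$ are sparse, yielding logarithmic (not merely $19$-fold) expansion with no fresh randomness at all. That deterministic step is not an alternative route — it is the only way to handle small $S$.

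A second, related gap is the treatment of Step~2 edges, which you correctly flag as ``the delicate case'' but then wave past. Step~2 assigns a color deterministically (the one minimising $d_c(v)$), so those edges are \emph{not} $\beta$-coins, and you cannot simply declare that revealing the graph first and the coins second makes them so. The paper sidesteps this cleanly in Case~2 by restricting attention to $F=S\cap Full$ and arguing inside the induced process on $Full\times Full$, where every relevant edge goes through Step~3 and genuinely does carry an independent $\tfrac{1}{2q}$--coin; $|S\setminus Full|\leq n^{1-\delta}$ is negligible in that range. Your intuition that the coin mechanism and a monotone reduction should do the mid- and large-size ranges is in the right spirit (the paper's Cases 2 and 3 do exactly that), but without the Case-1 deterministic argument and without restricting to $Full$ when you actually flip coins, the proof does not close.
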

\begin{claim}\label{clprev}
At time $m$, for every $R \subset V(G)$ with $|R| \leq \frac{n}{(\log n)^3}$, there are w.h.p. at most $2|R|$ edges within $R$ of color $c$ for every $c \in [q]$.
\end{claim}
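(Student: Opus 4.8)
The plan is to prove this by a first-moment (union bound) argument in the $G_{n,p}$ model and then transfer to $G_{n,m}$ via~\eqref{Prop1}, exactly as in Lemmas~\ref{lem1}, \ref{lem4}, \ref{lem3}. The key observation is that the event ``$R$ spans at least $2|R|$ edges of color $c$'' is, in particular, the event ``$R$ spans at least $2|R|$ edges'' in the underlying graph $G_{n,p}$, since the color classes are subgraphs; so we may ignore colors entirely for the counting and only pay a factor of $q$ at the end by a union bound over $c$. (In fact one does not even need the factor $q$, but it is harmless.) Thus it suffices to show that w.h.p.\ in $G_{n,p}$ every $R$ with $|R|\le n/(\log n)^3$ spans fewer than $2|R|$ edges.

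First I would fix $r=|R|$ in the range $2\le r\le n/(\log n)^3$ and bound the probability that some $r$-set spans at least $2r$ edges by
\[
\binom{n}{r}\binom{\binom{r}{2}}{2r}p^{2r}\le \left(\frac{en}{r}\right)^r\left(\frac{er^2/2}{2r}\right)^{2r}p^{2r}
= \left(\frac{en}{r}\cdot\frac{e^2r^2p^2}{16}\right)^r.
\]
Plugging in $p = \Theta(\log n / n)$ gives a summand of the shape $\bigl(C\, r\, (\log n)^2 / n\bigr)^r$ for an absolute constant $C$. Since $r\le n/(\log n)^3$, the base is at most $C/\log n = o(1)$, so each term is at most $(C/\log n)^r$. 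Summing over $r\ge 2$ gives a geometric-type series bounded by $O\!\bigl((\log n)^{-2}\bigr)=o(1)$; multiplying by $q=O(1)$ for the union bound over colors still gives $o(1)$. Finally, ``$R$ spans at least $2|R|$ edges of color $c$ for some $R$ of the given size'' is a monotone increasing property, so~\eqref{Prop2} (or~\eqref{Prop1}) transfers the bound from $G_{n,p}$ to $G_{n,m}=G$, completing the proof.

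I do not expect a genuine obstacle here: this is a routine sparse-set edge-density estimate, and the only mild point of care is checking that the chosen parameterization ($2r$ edges on $r$ vertices) is still controlled at the very top of the range $r = n/(\log n)^3$ — which it is, precisely because the $(\log n)^3$ in the denominator beats the $(\log n)^2$ coming from $p^2$. If one wanted the cleaner statement without the spurious factor $q$, one could instead note that the edges of color $c$ inside $R$ are determined by first exposing $G_{n,p}$ and then running COL, and COL's coloring decisions are independent of which further edges appear; but since the monotone bound already suffices, I would not bother.
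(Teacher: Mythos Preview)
Your proposal is correct and essentially identical to the paper's own proof: both ignore the coloring (since any color class is a subgraph of $G_{n,m}$), bound $\sum_r \binom{n}{r}\binom{\binom{r}{2}}{2r}p^{2r}$ in $G_{n,p}$, and transfer via monotonicity using \eqref{Prop2}. The only cosmetic differences are that the paper starts the sum at $r=4$ (the terms with $r\le 4$ vanish anyway since $\binom{r}{2}<2r$) and obtains the sharper $o(n^{-3})$, and it omits your harmless factor of $q$.
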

{\em Proof of Claim:}
We will show that w.h.p. every such $R$ does not have this many edges irrespective of color. Note that the desired property is monotone decreasing, so it suffices to use \eqref{Prop2} and show this occurs w.h.p. in $G_{n,p}$:
\begin{align*}
& \P\left( \exists |R|\leq \frac{n}{(\log n)^3}: |E(G[R])|>2|R| \right)\\
& \leq \sum_{r=4}^{n/(\log n)^3} \binom{n}{r} \binom {\binom{r}{2}}{2r} p^{2r} \\
& \leq \sum_{r=4}^{n/(\log n)^3} \left( \frac{ne}{r} \left(\frac{re^{1+o(1)} \log n}{4n} \right)^2 \right)^r \\
& \leq \sum_{r=4}^{n/(\log n)^3}\left( \frac{r}{n} \cdot \frac{e^{3+o(1)}(\log n)^2}{16}\right)^r = o (n^{-3}).
\end{align*}
\qed

{\em Proof of Lemma \ref{th3}:}\\
\textbf{Case 1:} $|S| \leq \frac{n}{(\log n)^4}$.\\
We may assume that $S \cup N_c^*(S)$ is small enough for Claim \ref{clprev} to apply (otherwise $|N_c^*(S)| \geq \frac{n}{(\log n)^3}-\frac{n}{(\log n)^4}$ so that $S$ actually has logarithmic expansion in color $c$). Then, using $e_c$ to denote the number of edges in color $c$, and using Theorem \ref{th2},
 \begin{equation*}
\frac{ \epsilon \log n}{1000q}|S|\leq\sum_{v \in S}d_c^*(v)=2e_c(S)+e_c (S,N_c^*(S))\leq  4|S|+2|N_c^*(S) \cup S|.
\end{equation*}
Hence,
$$|N_c^*(S)| \geq \frac{\epsilon \log n}{2001q}|S|\geq 19|S|,$$
which verifies the truth of the lemma for this case.

\medskip
\textbf{Case 2:}
$\frac{n}{(\log n)^4} \leq |S| \leq \frac{n}{50 \log n}$.\\
Let
$$m_+:=\frac{n \log n}{8q}.$$
Let $E_c^+,E_c^*$ denote the edges of $E^+,E^*$ respectively, which are colored $c$.
We begin by proving
\begin{claim}\label{clxx}
$|E_c^+|,|E_c^*|\geq m_+$ w.h.p.
\end{claim}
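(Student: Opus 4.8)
\emph{Proof proposal.} The plan is to observe that the edges placed into $E_c^+$ (and all but a routine correction of those into $E_c^*$) come entirely from Step 3 of COL, which is triggered by \emph{exactly} the edges $uv$ arriving after time $t_\epsilon$ with both endpoints in $Full$. Indeed, if $u,v\in Full$ then $d_c^{(t_\epsilon)}(u),d_c^{(t_\epsilon)}(v)\ge \frac{\epsilon\log n}{1000q}\ge 1$ for every colour $c$, so $C_u^{(t)}=C_v^{(t)}=\emptyset$ for all $t\ge t_\epsilon$; hence the preconditions of Step 3 are met, and no earlier branch applies (Step 2 requires precisely one endpoint to be $Full$). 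Let $X$ denote the number of edges of the process arriving in the window $(t_\epsilon,m]$ with both endpoints in $Full$; since $m\le\t_q$ w.h.p., all of these edges are seen by COL. Each such edge is independently placed into $E_c^+$ with probability $\tfrac1{2q}$ (colour $c$ with probability $\tfrac1q$, then $E^+$ with probability $\tfrac12$) and independently into $E_c^*$ with probability $\tfrac1{2q}$, and, as in Remark \ref{rem1}, these bits are independent of the graph process.

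Next I would bound $X$ from below. Condition on the first $t_\epsilon$ edges, which fix $Full$. By Lemma \ref{lemx}, w.h.p.\ $|V\setminus Full|\le n^{1-\delta}$, and by Lemma \ref{lem3}, w.h.p.\ every vertex has degree at most $20\log n$ in $G_m$. On these events, the number of edges in $(t_\epsilon,m]$ incident to a vertex outside $Full$ is at most $\sum_{v\notin Full}d^{(m)}(v)\le 20n^{1-\delta}\log n=o(n\log n)$, whereas the total number of edges in that window is $m-t_\epsilon=\bigl(\tfrac12-\epsilon-o(1)\bigr)n\log n$. Hence $X\ge\bigl(\tfrac12-\epsilon-o(1)\bigr)n\log n$ w.h.p., and since $\epsilon$ is a sufficiently small constant (in particular $\epsilon<\tfrac14$), this is at least $\tfrac25 n\log n$ for large $n$.

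Finally, conditioning on the graph process (hence on $X$), the number of Step-3 edges landing in $E_c^+$ is distributed exactly as $\Bin\bigl(X,\tfrac1{2q}\bigr)$, of mean $\tfrac{X}{2q}\ge\tfrac{n\log n}{5q}>\tfrac{n\log n}{8q}=m_+$, and the same holds for the Step-3 contribution to $E_c^*$, which is already a lower bound for $|E_c^*|$. A Chernoff bound then gives $\P\bigl(|E_c^+|<m_+\bigr)\le \P\bigl(\Bin(X,\tfrac1{2q})<\tfrac58\cdot\tfrac{X}{2q}\bigr)=e^{-\Omega(n\log n/q)}=o(1)$, and likewise for $E_c^*$; a union bound over the $q=O(1)$ colours and the two families completes the proof.

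I expect no genuine obstacle here: the only care needed is the bookkeeping that Step 3 fires on precisely the ``$Full$–$Full$ after $t_\epsilon$'' edges (and is not pre-empted by Step 2), and checking that the resulting density $\tfrac1{2q}\bigl(\tfrac12-\epsilon\bigr)$ comfortably exceeds $\tfrac1{8q}$, which holds because $\epsilon$ is chosen sufficiently small. The remainder is a standard Chernoff estimate.
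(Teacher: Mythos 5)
Your proof is correct and follows essentially the same plan as the paper's: identify the Step-3 edges (those arriving after $t_\epsilon$ with both endpoints in $Full$), note each lands in $E_c^+$ (respectively $E_c^*$) with probability $\tfrac{1}{2q}$ independently of the graph process, and finish with a Chernoff bound. The only minor variation is in bounding the number of such edges from below --- you discount edges touching $V\setminus Full$ deterministically via the max-degree bound of Lemma \ref{lem3}, whereas the paper stochastically dominates by a sum of independent Bernoullis and applies Chernoff once more; both give the required $\Omega(n\log n)$ bound.
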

\begin{proof}
Once $Full$ has been formed, it follows from Lemma \ref{lemx}, that at most $(n^{1-\delta}(n-n^{1-\delta}))+\binom{n^{1-\delta}}{2} < 2n^{-\delta} \binom{n }{2}$ spaces remain in $E(Full, V \setminus Full)$ or $E(V\setminus Full)$.
For each of the $m-t_{\epsilon} \sim (\frac{1}{2}-\epsilon)n \log n$ edges appearing thereafter,
since $\lesssim n \log n <n^{-\delta} \binom{n}{2}$  edges have been placed already,
each has a probability $\geq 1-4n^{-\delta}$ of having both ends in $Full$, independently of what has happened previously.
Applying the Chernoff bounds (see for example \cite{FK}, Chapter 21.4)
we see that the probability that fewer than $\frac{1}{3} n \log n$ of these $(\frac{1}{2}-\epsilon)n \log n$ edges were between vertices in $Full$ is
at most $e^{-\Omega(n\log n)}$. We remind the reader that every edge with both endpoints in $Full$ is randomly colored and placed in $E^+$ or $E^*$ in Step 3 of COL.

So, we may assume there are at least $\frac{1}{3q} n \log n$ of these edges in $E^+ \cup E^*$ of color $c$ in expectation and then the Chernoff bounds imply that there are at least $\frac{1}{8q} n \log n=m^+$ w.h.p. in both $E^+$ and $E^*$.
\end{proof}
Suppose there exists $S$ as above with $|N_c^*(S)|< \frac{\log n}{ 1000 q} |S|$. For $F:=S \cap Full$,
note that $|F| \geq |S| - n^{1-\delta}=|S|(1-o(1))$. Therefore $|N_c^*(F) \cap Full| < \frac{ \log n}{1000 q} |S| \leq \frac{\log n}{999q} |F|$. We will show that w.h.p. there are no such $F\subseteq Full$.

We consider the graphs $H_1=G_{|Full|,m_+}\setminus E_{t_\e}$ and the corresponding independent model $H_2=G_{|Full|,p_+}\setminus E_{t_\e}$ where $p_+\sim \frac{\log n}{4qn}$. We will show that w.h.p. $H_2$ contains no set $F$ of the postulated size and small neighborhood. Together with \eqref{Prop2} (and $\binom{|Full|}{2}p_+ \rightarrow \infty$) this implies that w.h.p. $H_1$ has no such set either. Note that by Lemma \ref{lem3}, we see that w.h.p. at most $20|F|\log n$ edges of $E_{t_\e}$ are incident with $F$. This calculation is relevant because $(E^*\setminus E_{t^*})$'s only dependence on $E_{t_\e}$ is that it is disjoint from it.

Hence, in $H_2$,
\begin{align*}
\P(\exists F)& \leq \sum_{f =(n-o(n)) / (\log n)^4}^{n / 50 \log n} \sum_{k=0}^{\frac{\log n}{999q}f} \binom{|Full|}{f} \binom{|Full|}{k} f^k p_+^k(1-p_+)^{(|Full|-k)f-20f\log n} \\
& \leq \sum_{f=(n-o(n)) / (\log n)^4}^{n / 50\log n} \sum_{k=0}^{\frac{\log n}{ 999q}f}\underbrace{\left(\frac{ne}{f} \right)^f\left(\frac{nf}{k}\cdot\frac{ \log n}{qn}     \right)^ke^{-nfp_+ (1-o(1))}}_{u_{f,k}}.
\end{align*}
Here, the ratio
\begin{equation*}
\frac{u_{f,k+1}}{u_{f,k}}=\frac{f \log n}{q(k+1)}\left( \frac{k}{k+1}\right)^{k}\geq 999/e.
\end{equation*}
Therefore,
$$\P(\exists F) \leq2 \sum_{f \sim n / (\log n)^4}^{n / 50 \log n}\left(\frac{ne}{f}\cdot  ( 999)^{\frac{\log n}{999 q}}n^{-1/5q}\right)^f\leq 2n \left(3(\log n)^4n^{-1/10q}\right)^{\frac{(1-o(1))n}{(\log n)^4}}=o(1).$$
\textbf{Case 3:}
$\frac{n}{ 50 \log n} \leq |S| \leq \frac{n}{10^6q}$.\\
Choose any $S_1 \subset S$ of size $\frac{n}{50 \log n}$, then
$$|N_c^*(S)| \geq |N_c^*(S_1)|-|S|\geq \frac{\log n}{1000q}\cdot \frac{n}{50 \log n}-\frac{n}{10^6 q}= 19 \alpha n\geq  19|S|.$$
\qed

The following corollary applies to the subgraph of $G_{\t_{2\s}}$ induced by $E^*_c$.
\begin{cor}\label{corrr}
W.h.p. $|N_c^*(S)|\geq 2|S|$ for all $S \subset V(G)$ with $|S| \leq \alpha n$.
\end{cor}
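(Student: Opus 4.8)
The statement is essentially a clean-up of Lemma~\ref{th3}: it extends the expansion estimate from $S\subseteq LARGE$ to an arbitrary $S$, at the price of dropping the expansion factor from $19$ to $2$, and that slack is exactly what lets us absorb the SMALL vertices of $S$. The plan is to split $S=L\sqcup B$ with $L=S\cap LARGE$ and $B=S\cap SMALL$, bound $|N_c^*(S)|$ below by the contributions of $L$ and of $B$ separately, and show that these two families of external neighbours are almost disjoint. The ingredients are Lemma~\ref{th3} (applied to $L$, which has $|L|\le|S|\le\alpha n$), Corollary~\ref{cord} ($d_c^*(v)\ge 2$ for every $v$), and the structural Lemma~\ref{lem1}, which says that the SMALL vertices are very spread out in $G$: no edge joins two of them, no vertex has two SMALL neighbours, and no path of length at most five joins two of them.

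First I would bound the contribution of $B$. Since no two SMALL vertices are adjacent we have $N_c^*(b)\subseteq LARGE$ for every $b\in B$, and since no two SMALL vertices share a neighbour the sets $\{N_c^*(b)\}_{b\in B}$ are pairwise disjoint; by Corollary~\ref{cord} each has size $\ge 2$, so $|N_c^*(B)|=\sum_{b\in B}d_c^*(b)\ge 2|B|$. Because each vertex of $L$ has at most one SMALL neighbour, at most $|L|$ of these neighbours lie in $L$, whence $|N_c^*(B)\setminus S|=|N_c^*(B)\setminus L|\ge 2|B|-|L|$. Next, the contribution of $L$ (we may assume $L\ne\emptyset$, else we are already done since then $|N_c^*(S)|\ge 2|B|=2|S|$): Lemma~\ref{th3} gives $|N_c^*(L)|\ge 19|L|$, of which at most $|L|$ lie in $L$ and — again using that each vertex of $L$ has at most one SMALL neighbour — at most $|L|$ lie in $B$, so $|N_c^*(L)\setminus S|\ge 17|L|$. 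Finally I would prove $|N_c^*(L)\cap N_c^*(B)|\le|L|$: fix $a\in L$; if $w\ne w'$ were colour-$c$ neighbours of $a$ that are also colour-$c$ neighbours of SMALL vertices $b,b'\in B$, then either $b=b'$, producing a $4$-cycle $a\,w\,b\,w'$ through the SMALL vertex $b$, or $b\ne b'$, producing a path $b\,w\,a\,w'\,b'$ of length four between two SMALL vertices — both are constant-size small structures forbidden by Lemma~\ref{lem1}; hence each $a\in L$ has at most one colour-$c$ neighbour in $N_c^*(B)$, and summing over $a$ gives the bound. Combining, using that $N_c^*(L)\setminus S$ and $N_c^*(B)\setminus S$ both lie in $N_c^*(S)$,
$$|N_c^*(S)|\ \ge\ |N_c^*(L)\setminus S|+|N_c^*(B)\setminus S|-|N_c^*(L)\cap N_c^*(B)|\ \ge\ 17|L|+(2|B|-|L|)-|L|\ =\ 15|L|+2|B|\ \ge\ 2|S|.$$

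The only step that is not pure bookkeeping is the bound $|N_c^*(L)\cap N_c^*(B)|\le|L|$ — that no LARGE vertex is a colour-$c$ "second neighbour" of two different SMALL vertices of $B$ through distinct intermediate vertices. This is precisely one of the short forbidden configurations of Lemma~\ref{lem1}, so the real work there is only to check that the vertices and edges of the configuration are genuinely distinct (so that Lemma~\ref{lem1} applies) and, as elsewhere in the paper, to note that Lemma~\ref{lem1} — stated for $G_{n,m}$ — transfers to $G_{\t_{2\s}}$, since only $\le 2\omega n$ further edges are revealed and a union bound over them creates no such small structure on SMALL vertices w.h.p. I do not anticipate any other obstacle; the remaining inequalities are exactly as displayed above.
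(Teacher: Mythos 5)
Your proof is correct and follows essentially the same route as the paper: split $S$ into its \emph{LARGE} and \emph{SMALL} parts, apply Lemma~\ref{th3} to the \emph{LARGE} part, use Corollary~\ref{cord} together with the forbidden small structures of Lemma~\ref{lem1} (no edge, no $P_2$, no $P_4$, no $C_4$ involving two \emph{SMALL}/$small_c$ vertices) to control the \emph{SMALL} part and the overlaps, and finish by inclusion--exclusion. The only cosmetic difference is that you subtract an extra $|L|$ for ``$N_c^*(L)\cap L$'' (which is empty by definition of $N_c^*$), arriving at $15|L|+2|B|$ rather than the paper's $16|S_2|+2|S_1|$; both comfortably exceed $2|S|$.
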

\begin{proof}
We know from Corollary \ref{cord} that w.h.p. at time $m$ every vertex $v$ has $d_c^*(v) \geq 2$. Let $S_2=S \cap LARGE$, $S_1 = S \setminus S_2$. Then
\begin{align*}
|N_c^*(S)| &= |N_c^*(S_1)|+|N_c^*(S_2)| - |N_c^*(S_1)\cap S_2|- |N_c^*(S_2)\cap S_1| - |N_c^*(S_1) \cap N_c^*(S_2)| \\
& \geq |N_c^*(S_1)|+|N_c^*(S_2)|-|S_2|- |N_c^*(S_2)\cap S_1| - |N_c^*(S_1) \cap N_c^*(S_2)|.
\end{align*}
Clearly, $|S_2| \leq |S|\leq \alpha n$, and so Lemma \ref{th3} gives $|N_c^*(S_2)| \geq 19 |S_2|$, w.h.p. Also, recall from Lemma \ref{lem1} that w.h.p. there are no small structures in $G_m$ and since $SMALL_c \subset SMALL$ w.h.p., this means there aren't any small-$c$-structures either. In particular,
\begin{itemize}
\item No $small_c$ vertices are adjacent and there is no path of length two between $small_c$ vertices which implies that $|N_c^*(S_1)| \geq 2 |S_1|$ and $|N_c^*(S_2) \cap S_1| \leq |S_2|$.
\item In addition, there is no $C_4$ containing a $small_c$ vertex, and no path of length 4 between $small_c$ vertices. This means that $|N_c^*(S_1) \cap N_c^*(S_2)| \leq |S_2|$.
\end{itemize}
We deduce that
$|N_c^*(S)| \geq 2|S_1|+19|S_2|-3|S_2| \geq 2|S|$.
\end{proof}

Recall $\G_c^*$ is the subgraph induced by edges of color $c$ that are not in $E^+$.
\begin{cor}\label{COR}
W.h.p. $\G_c^*$ is connected for every $c \in [q]$.
\end{cor}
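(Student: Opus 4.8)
The plan is to combine the small--set expansion of Corollary~\ref{corrr} with a short ``fresh randomness'' argument that forbids a sparse cut between two linearly large sets. Since $\G_c^*$ only gains edges as the process runs and connectivity is monotone increasing, I would first reduce to showing that $\G_c^*$ is connected already at time $m$ (recall $m\le\t_{2\s}$ w.h.p., so all of $e_1,\dots,e_m$ are processed by COL). Conditioning on the w.h.p.\ conclusions of Corollary~\ref{corrr} and Lemma~\ref{lemx}, the first observation is that no connected component, or union of components, $C$ of $\G_c^*$ can satisfy $1\le|C|\le\alpha n$: no edge of $E_c^*$ leaves such a $C$, so $N_c^*(C)=\emptyset$, contradicting $|N_c^*(C)|\ge 2|C|\ge 2$. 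Hence every component of $\G_c^*$ has more than $\alpha n$ vertices, so if $\G_c^*$ were disconnected there would be a partition $V=A\sqcup B$ (one component versus the rest) with $|A|,|B|\ge\alpha n$ and no edge of $E_c^*$ joining $A$ to $B$. It therefore remains to show that w.h.p.\ no such ``bad'' partition exists, for any $c\in[q]$.

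For this I would fix $c$ and a partition $V=A\sqcup B$ with $|A|,|B|\ge\alpha n$, and use that by Lemma~\ref{lemx} both $A\cap Full$ and $B\cap Full$ have size at least $\alpha n-n^{1-\delta}\ge\tfrac12\alpha n$. Reveal the edges after time $t_\e$ one by one: whenever an edge $e_t$ with $t_\e<t\le m$ joins $A\cap Full$ to $B\cap Full$, both endpoints are $Full$ and hence --- being $Full$ at time $t_\e$ --- never needy again, so Step~3 of COL applies, giving $e_t$ a uniformly random color and, independently, assigning it to $E^*$ or $E^+$ by a fair coin; crucially (Remark~\ref{rem1}) these bits are independent of the graph process and of all prior history. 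Thus, conditionally on everything revealed before $e_t$,
\[
\P\bigl(e_t\text{ is a color-}c\text{ edge of }E^*\text{ joining }A\text{ to }B\bigr)\ \ge\ \frac{|A\cap Full|\,|B\cap Full|-m}{\binom n2}\cdot\frac1{2q}\ \ge\ c_0
\]
for a constant $c_0=c_0(q)>0$ (using $|A\cap Full|\,|B\cap Full|\ge\alpha^2n^2/4$ and $m=o(n^2)$, with $\alpha=\alpha(q)$). Since for small $\e$ there are $m-t_\e\ge\tfrac14 n\log n$ edges with $t_\e<t\le m$, iterating this conditional bound gives $\P(\text{no such edge appears})\le(1-c_0)^{n\log n/4}\le n^{-c_0 n/4}$, and a union bound over the at most $2^n$ partitions $(A,B)$ and the $q$ colors finishes the argument, since $q\,2^n n^{-c_0 n/4}=o(1)$.

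The step I expect to be the crux is this last one, and the subtle point it hinges on is that small--set expansion by itself does \emph{not} imply connectivity: two disjoint cliques of size $n/2$ satisfy $|N_c^*(S)|\ge 2|S|$ for all $|S|\le\alpha n$ yet are disconnected, so one genuinely must exploit the independent randomness injected in Step~3 of COL. That randomness is available precisely because a vertex that is $Full$ at time $t_\e$ stays non-needy, forcing every $Full$--$Full$ edge appearing after $t_\e$ to be colored uniformly at random (and split between $E^*$ and $E^+$ by an independent fair coin); the resulting $n^{-c_0 n/4}$ bound then comfortably absorbs the $2^n$ choices of partition, uniformly over the $O(1)$ colors. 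Everything else is routine: the reduction to time $m$, the exclusion of small components, and the standard martingale-style iteration of the per-edge conditional bound.
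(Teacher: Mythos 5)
Your proposal is correct and follows essentially the same route as the paper: rule out cuts with a side of size at most $\alpha n$ via Corollary~\ref{corrr}, pass to the $Full$ vertices on each side (using Lemma~\ref{lemx} to keep them $\Omega(n)$), and then exploit the independent coloring randomness of Step~3 for $Full$--$Full$ edges after time $t_\e$ to show that, for each of the $2^n$ candidate large cuts, the probability of no color-$c$ $E^*$-edge crossing it is $e^{-\Omega(n\log n)}$. The only difference is cosmetic: you iterate a per-edge conditional bound directly over the $m-t_\e$ steps after $t_\e$, whereas the paper packages the same count of fresh $Full$--$Full$ edges into an auxiliary model $H_2=G_{|Full|,p_+}\setminus E_{t_\e}$ with $p_+\sim\frac{\log n}{4qn}$ and invokes the $G_{n,m}$-to-$G_{n,p}$ transfer; both yield a bound of the form $2^n e^{-\Omega(n\log n)}=o(1)$.
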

\begin{proof}
If $[S,V \setminus S]$ is a cut in $\G_c^*$ then Corollary \ref{corrr} implies that $|S|,|V\setminus S| \geq \alpha n$. Let $F = S \cap Full$. Since $|V\ \setminus Full| \leq n^{1-\delta}< \frac{\alpha n}{2} $ we see that $|F|,|Full \setminus F|\geq \frac{\alpha n}{2}$. As in \textbf{Case 2} of Lemma \ref{th3} we show that w.h.p. no such $F$ exists by doing the relevant computation in
$H_2$ with $p_+\sim \frac{\log n}{4qn}$:
\begin{align}
\P \left(\exists F\right)& \leq 2 \sum_{f=\frac{\alpha n}{2}}^{|Full|-\frac{\alpha n}{2}} \binom{|Full|}{f}
(1-p_+)^{f(|Full|-f)-t_\epsilon}\label{sub}\\
& \leq n 2^{n} (1-p_+)^{\frac{\alpha n}{2}  (n-n^{1-\delta}-\frac{\alpha}{2}n)-o(n^2)}\\
& \leq n2^n e^{-\alpha n\log n/10q} =o(1).
\end{align}
We subtract $t_\e$ from $f(|Full|-f)$ because we do not include the first $t_\e$ edges in this calculation. This is because $Full$ depends on them.
\end{proof}
\section{Rotations}
We now use $E_c^+$ to build the Hamiltonian cycles for every color $c$ using P\'{o}sa rotations. We let $G_c$ denote the graph induced by the edges of color $c$. Given a path $P=(x_1,x_2,\ldots,x_k)$ and an edge $x_ix_k, 2\leq i\leq k-2$ we say that the path $P'=(x_1,\ldots,x_i,x_k,\ldots,x_{i+1})$ is obtained from $P$ by a rotation with $x_1$ as the fixed endpoint.

For a path $P$ in $G_c$ with endpoint $a$ denote by $END(a)$, the set of all endpoints of paths obtainable from $P$ by a sequence of P\'{o}sa rotations with $a$ as the fixed endpoint.
In this context, P\'osa \cite{Po} shows that $|N_c(END(a))|<2|END(a)|$. This is assuming that in the course of executing the rotations, no simple extension of our path is found. It follows from Corollary \ref{corrr} that w.h.p. $|END(a)|\geq \alpha n$. For each $b\in END(a)$ there will be a path $P_b$ of the same length as $|P|$ with endpoints $a,b$. We let $END(b)$ denote the set of all endpoints of paths obtainable from $P_b$ by a sequence of P\'{o}sa rotations with $b$ as the fixed endpoint. It also follows from Corollary \ref{corrr} that w.h.p. $|END(b)|\geq \alpha n$ for all $b\in END(a)$. Let $END(P)=\set{a}\cup END(a)$.

An edge $u=\set{x,y}$ of color $c$ with $y\in END(x)$ is called a {\em booster}. Let $P_{x,y}$ be the path of length $|P|$ from $x$ to $y$ implied by $y\in END(x)$. Adding the edge $u$ to $P_{x,y}$ will either create a Hamilton cycle or imply the existence of a path of length $|P|+1$ in $G_c$, after using Corollary \ref{COR}. Indeed, if the cycle $C$ created is not a Hamilton cycle, then the connectivity of $\G_c^*$ implies that there is an edge $u=xy$ of color $c$ with $x\in V(c)$ and $y\notin V(C)$. Then adding $u$ and removing an edge of $C$ incident to $x$ creates a path of length $|P|+1$.

We start with a longest path in $\G_c^*$ and let $E_c^+=\set{f_1,f_2,\ldots,f_\ell}$ where w.h.p. $\ell\geq m_+=\frac{n\log n}{8q}$, see Claim \ref{clxx}. A round consists of an attempt to find a longer path than the current one or to close a Hamilton path to a cycle. Suppose we start a round with a path $P$ of length $k$. We use rotations and construct many paths. If one of these paths has an endpoint with a neighbor outside the path then we add this neighbor to the current path and start a new round with a path of length $k+1$. Here we only use edges not in $E_c^+$. Failing this we compute $END(P)$ and look for a booster in $E_c^+$. In the search for boosters we start from $f_r$ assuming that we have already examined $f_1,f_2,\ldots,f_{r-1}$ in previous rounds. Now $f_r$ is chosen uniformly from $(1-o(1))\binom{n}{2}$ pairs and so the probability it is a booster is at least $\beta=(1-o(1))\alpha^2$. It is clear that at most $n$ boosters are needed to create a Hamilton cycle. Adding a booster increases the length of the current path by one, or creates a Hamilton cycle. So the probability we fail to find a Hamilton cycle of color $c$ is at most $\P(Bin(m_+,\beta)\leq n)=o(1)$. We can inflate this $o(1)$ by $\sigma$ to show that w.h.p. we find a Hamilton cycle in each color, completing the proof of Theorem \ref{th1}.

\section{Concluding remarks}
In this paper we studied a very natural variant of the classical
problem of the appearance of $\s$ edge disjoint Hamilton cycles in a random graph process. 
We showed that one can color the edges of the process online so that every color class has a Hamilton cycle exactly at the moment when
the underlying graph has $\s$ edge disjoint ones. 

The paper \cite{BF} shows that at the hitting time $\t_{2\s+1}$ there will w.h.p. be $\s$ edge disjoint Hamilton cycles plus an edge disjoint matching of size $\lfloor n/2\rfloor$. It is straightforward to extend this result to the online situation. It should be clear that at time $\t_{2\s+1}$ COL can be used to construct w.h.p. $E_c^*,E_c^+,c=1,2,\s+1$ such that $E_c^*\cup E_c^+$ induce Hamiltonian graphs for $1\leq c\leq \s$ and $d_{\s+1}^*(v)\geq 1$ for $v\in[n]$. For color $\s+1$, we replace the statement of Corollary \ref{corrr} by 
\beq{last1}{
\text{W.h.p. $|N_{\s+1}^*(S)|\geq |S|$ for all $S \subset V(G)$ with $|S| \leq \alpha n$. }
}
We then replace rotations by alternating paths, using $E_{\s+1}^+$ as boosters. The details are as described in Chapter 6 of \cite{FK}. In outline, let $G=(V,E)$ be a graph without a matching of size $\lfloor|V(G)|/2\rfloor$. For $v\in V$ such that $v$ is isolated by some maximum matching, let
$$A(v)=\set{w\in V:w\neq v\text{ and }\exists\text{ a maximum matching of $G$ that isolates $v$ and $w$}}.$$
We use the following lemma 
\begin{lem}\label{matchlemma}
Let $G$ be a graph without a matching of size $\lfloor|V(G)|/2\rfloor$. Let $M$ be a maximum matching of $G$. If $v\in V$ and $A(v)\neq\emptyset$ then $|N_G(A(v))|<|A(v)|$.
\end{lem}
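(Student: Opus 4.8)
The plan is to identify $A(v)$ as a Gallai--Edmonds ``inessential'' set of a slightly smaller graph and then read the conclusion off the Gallai--Edmonds structure theorem. Write $n=|V(G)|$, let $\nu(H)$ be the matching number of a graph $H$, and put $G'=G-v$; throughout, $N_G(\cdot)$ of a set means the set of vertices outside it that are adjacent to it. I would first record three preliminary facts. \emph{(i) $A(v)=D(G')$}, the set of vertices of $G'$ that are exposed by some maximum matching of $G'$: since $A(v)\neq\emptyset$, some maximum matching of $G$ exposes $v$, so deleting $v$ gives $\nu(G')=\nu(G)$; and for $w\neq v$, $w\in A(v)$ iff $\nu(G-v-w)=\nu(G)$ iff $\nu(G'-w)=\nu(G')$, i.e.\ $w\in D(G')$. \emph{(ii) $v$ has no neighbour in $A(v)$}: otherwise a maximum matching of $G$ exposing $v$ and such a neighbour $w$ could be enlarged by the edge $vw$. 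Consequently the neighbourhood of $A(v)$ lying outside $A(v)$ is the same in $G$ as in $G'$, so $N_G(A(v))=N_{G'}(D(G'))\setminus D(G')=A(G')$, the Gallai--Edmonds barrier of $G'$. \emph{(iii) $\mathrm{def}(G')\ge1$}: the deficiency of $G'$ is $(n-1)-2\nu(G')=(n-1)-2\nu(G)\ge 1$ because $\nu(G)\le\lfloor n/2\rfloor-1$ by hypothesis.

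Now apply the Gallai--Edmonds structure theorem to $G'$: the subgraph of $G'$ induced on $D(G')$ splits into some number $c\ge1$ of connected components, each factor-critical and hence non-empty, and $\mathrm{def}(G')=c-|A(G')|$. Together with (iii) this gives $|A(G')|=c-\mathrm{def}(G')\le c-1$, whence, using (i) and (ii),
\[
|N_G(A(v))|=|A(G')|\le c-1\le |D(G')|-1=|A(v)|-1<|A(v)|,
\]
the middle inequality being just that $D(G')$ is a disjoint union of $c$ non-empty sets. This is the assertion of the lemma.

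The only step carrying real content is (i): it is what converts the purely existential definition of $A(v)$ into an object to which the structure theorem applies, and everything after it is bookkeeping (the maximum matching $M$ in the statement is used only implicitly, via $A(v)\neq\emptyset$, to get $\nu(G-v)=\nu(G)$). A self-contained proof avoiding Gallai--Edmonds --- imitating the P\'osa rotation argument, but with $M$-alternating paths --- I expect to be the harder route: relative to a single maximum matching rooted at $v$ one cannot see all of $A(v)$ (already when $G$ is a disjoint union of two triangles, $A(v)$ lies entirely in the triangle not containing $v$), so one must let the matching vary with the neighbour being processed and then check that the resulting assignment of $N_G(A(v))$ into $A(v)$ is well defined and injective with one element of $A(v)$ left over --- which is really re-proving the relevant part of the Gallai--Edmonds theorem by hand.
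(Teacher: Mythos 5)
Your proof is correct. Note first that the paper itself supplies no proof of this lemma: it states the result and points to Chapter~6 of \cite{FK} for the surrounding argument, so there is nothing in the paper to compare against line by line. Substantively, your derivation via the Gallai--Edmonds structure theorem is sound at every step: (i) $A(v)=D(G-v)$ is established cleanly from $\nu(G-v)=\nu(G)$ (which does hold because $A(v)\neq\emptyset$ forces some maximum matching to expose $v$); (ii) the observation that $v$ sends no edge into $A(v)$ correctly identifies $N_G(A(v))$ with the Gallai--Edmonds barrier $A(G-v)$ of $G-v$; (iii) the deficiency bound $(n-1)-2\nu(G)\geq 1$ follows from $\nu(G)\leq\lfloor n/2\rfloor-1$; and the final chain $|A(G-v)|=c-\mathrm{def}(G-v)\leq c-1\leq |D(G-v)|-1$ uses the structure theorem exactly as intended, with each of the $c$ factor-critical components of $G'[D(G')]$ nonempty. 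The route the literature (and \cite{FK}) takes is the elementary one you sketch at the end: fix the maximum matching $M$ missing $v$, show that $A(v)\cup\{v\}$ is generated by even-length $M$-alternating paths from $v$, and pair each vertex of $N_G(A(v))$ with its $M$-partner inside $A(v)$; this is self-contained but is, as you say, essentially re-deriving the relevant fragment of Gallai--Edmonds by hand. Your version is shorter and cleaner at the cost of invoking the structure theorem as a black box. One small cosmetic point: you correctly observe that the matching $M$ in the lemma statement plays no role once $A(v)$ is defined matching-independently; it is a vestige of the alternating-path formulation.
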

We start with a maximum matching $M$ of $\G_{\s+1}^*$. Suppose that $v$ is not covered by $M$. Using \eqref{last1}, we see that w.h.p. $|A(v)|\geq \alpha n$. Further, if $u\in A(v)$ and $uv\in E_{\s+1}^+$ then adding this edge gives a larger matching. Also, because $u$ is isolated by a maximum matching, there is a corresponding set $A_u$ of size at least $\alpha n$ such if $w\in A_u$ and $uw\in E_{\s+1}^+$ then we can find a larger matching. Therefore we have $\Omega(n^2)$ boosters and the proof is similar to that for Hamilton cycles.

There are several related problems which can likely be treated using our approach. One potential application for our technique is to show that for any fixed positive integer $k$ and any decomposition $k=k_1+...+k_s$ into the sum of $s$ positive integers, there is an online algorithm, coloring the edges of a random graph process in $s$ colors so that exactly at the hitting time $\tau_k$ the $i$-th color forms a $k_i$-connected spanning graph for $i=1,\ldots,s$. In general, one can generate many more interesting problems by considering the online Ramsey version of other results in the theory of random graphs.

\end{document}